\tikzstyle{BlueLine}=[line width=0.3mm,color=blue,text=black]
\tikzstyle{BluePoly}=[BlueLine,fill=blue!20]
\tikzstyle{RedLine}=[line width=0.3mm,color=red,text=black]
\tikzstyle{RedPoly}=[RedLine,fill=red!20]
\tikzstyle{GreenLine}=[thick,draw=black!30!green,text=black]
\tikzstyle{GreenPoly}=[thick,draw=green!50!black,fill=green!30,join=bevel]
\tikzstyle{OrangeLine}=[thick,color=orange]
\tikzstyle{GrayLine}=[thick,color=black!50!gray]
\tikzstyle{GrayPoly}=[GrayLine,fill=gray!20]
\tikzstyle{dot}=[shape=circle,draw,color=black,fill=black,inner sep=1.5pt]
\tikzstyle{bigdot}=[dot,inner sep=2pt]
\tikzstyle{littledot}=[dot,inner sep=1.2pt]
\tikzstyle{disk}=[thick,shape=circle,draw,color=black,fill=yellow!10]
\tikzstyle{plate}=[thick,shape=rectangle,draw,color=black,fill=yellow!10,
\tikzstyle{dot}=[shape=circle,draw,color=black,fill=black,inner sep=1.5pt]
\tikzstyle{opendot}=[dot,fill=white]
\tikzstyle{disk}=[thick,shape=circle,draw,color=black]
\theoremstyle{plain}
\newtheorem{thm}{Theorem}[section]
\newtheorem{lem}[thm]{Lemma}
\newtheorem{cor}[thm]{Corollary}
\newtheorem{thma}{Theorem}
\theoremstyle{definition}
\newtheorem{defn}[thm]{Definition}
\newtheorem{rem}[thm]{Remark}
\newtheorem{example}[thm]{Example}
\newcommand{\bool}{\textsc{Bool}}
\newcommand{\ncht}{\textsc{NCHT}}
\newcommand{\upper}{{\uparrow}}
\newcommand{\low}{{\downarrow}}
\newcommand{\tri}{\textsc{Tri}}
\newcommand{\planar}{\textsc{Pnr}}
\newcommand{\planted}{\textsc{Ptd}}
\newcommand{\reduced}{\textsc{Red}}
\newcommand{\val}{{\operatorfont val}}
\newcommand{\PMod}{{\operatorfont PMod}}
\newcommand{\nbhd}{{\operatorfont nbhd}}
\newcommand{\pprime}{{\prime\prime}}
\newcommand{\ustar}{\Asterisk^\circ}
\newcommand{\mstar}{\Asterisk^\bullet}
\keywords{}
\subjclass[2010]{}
\begin{document}
	
	\title{The Polyhedral Tree Complex}
	\author{Michael Dougherty} 
	\email{doughemj@lafayette.edu}
	\address{Department of Mathematics, 
		Lafayette College, Easton, PA 18042}
	
	\begin{abstract}
		The tree complex is a simplicial complex defined in recent
		work of Belk, Lanier, Margalit, and Winarski with 
		applications to mapping class groups and complex dynamics.
		This article introduces a connection between this setting and 
		the convex polytopes known as associahedra and cyclohedra. 
		Specifically, we describe a characterization of these polytopes using
		planar embeddings of trees and show that the tree complex is the barycentric 
		subdivision of a polyhedral cell complex for which the cells
		are products of associahedra and cyclohedra. 
	\end{abstract}
	
	\maketitle
	
	\section*{Introduction}
	
	Convex polytopes which arise from combinatorial structures
	form a rich area of study with an increasing number of applications.
	Associahedra, cyclohedra, and permutahedra are
	some of the classic examples \cite{bott94, simion03, ziegler95}, while more recent cases 
	include the the use of generalized associahedra in the study of cluster algebras \cite{fomin07}
	and the appearance of amplituhedra in work on scattering amplitudes \cite{arkani-hamed}.
	In each of these examples, the convex polytope can be combinatorially
	defined by describing the partially ordered set of its faces, i.e.
	the face poset. 
	The main goal of this article is to introduce a polytope
	arising from a partial order on a certain type of trees
	and examine an associated cell complex.
	
	A \emph{marked $n$-tree} (or simply an \emph{$n$-tree}) is a tree with
	marked vertices $v_1,\ldots,v_n$, along with some number of unmarked
	and unlabeled vertices, such that all vertices of valence 
	$1$ or $2$ are marked. While there is no stated restriction on
	the unmarked vertices, one can show that 
	an $n$-tree has at most $2n-2$ vertices in total. A
	\emph{planar $n$-tree} is the free isotopy class of a 
	planar embedding $\Gamma \to \mathbb{C}$, where $\Gamma$ is
	an $n$-tree. In other words, a planar $n$-tree consists of an $n$-tree
	together with the cyclic counter-clockwise ordering of edges 
	incident to a vertex.
	
	There is a natural partial order on the set of planar $n$-trees,
	defined by declaring that $\Gamma_1 \leq \Gamma_2$ if there is a 
	collection of subtrees in $\Gamma_1$, each of which contains at most 
	one marked vertex, such that contracting each subtree to a point yields 
	$\Gamma_2$. Note that the direction of this partial order is perhaps the reverse
	of what you might expect; unlike the poset structure for Outer space
	\cite{culler86}, for example, contracting a subtree corresponds to 
	moving up in the partial order. Within this partially ordered set, each 
	planar $n$-tree has a corresponding \emph{lower set} which consists of
	all elements below that tree in the partial order; \emph{upper sets} 
	are defined similarly. Our first main theorem identifies the lower sets as 
	familiar combinatorial objects.
	
	\begin{thma}[Theorem~\ref{thm:poset-product}]\label{thma:face-poset-polytope}
		The lower set of a planar $n$-tree is the face poset of a convex
		polytope which can be expressed as a product of associahedra and cyclohedra.
	\end{thma}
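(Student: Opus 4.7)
The plan is to show that the lower set $\Gamma^{\downarrow}$ of a planar $n$-tree $\Gamma$ decomposes as a product indexed by the vertices of $\Gamma$, and then to identify each factor as the face poset of an associahedron or a cyclohedron. The theorem will then follow from the standard fact that the face poset of a product of convex polytopes is the product of the individual face posets.

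First I would translate the partial order into local data. Because $\Gamma_1 \leq \Gamma$ means $\Gamma$ is obtained from $\Gamma_1$ by contracting a family of vertex-disjoint subtrees each containing at most one marked vertex, an element of $\Gamma^{\downarrow}$ amounts to a choice, at every vertex $v$ of $\Gamma$, of a small planar tree $S_v$ replacing $v$. The tree $S_v$ comes equipped with $k$ external half-edges distributed among its vertices in a way that respects the cyclic ordering of the $k$ edges incident to $v$. If $v$ is marked then exactly one vertex of $S_v$ inherits the mark, and every other vertex of $S_v$ is unmarked with total valence at least three. The key point is that the choices at different vertices of $\Gamma$ are independent and that further refinement in $\Gamma^{\downarrow}$ corresponds to independent refinement of each $S_v$, yielding a poset isomorphism
\[
\Gamma^{\downarrow} \;\cong\; \prod_{v \in V(\Gamma)} P_v,
\]
where $P_v$ is the local refinement poset at $v$.

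Next I would identify $P_v$ case by case. When $v$ is unmarked of valence $k$, the poset $P_v$ parameterizes planar trees with $k$ cyclically ordered external half-edges whose internal vertices have valence at least three; this matches the standard tree-model description of the face poset of a cyclohedron. When $v$ is marked of valence $k$, one vertex of $S_v$ is distinguished and has no valence restriction while every other vertex has valence at least three. The distinguished vertex plays the role of a root that effectively cuts the cyclic order on the external half-edges into a linear order, so $P_v$ should match the face poset of an associahedron of the appropriate dimension. The boundary cases $k=1,2$ degenerate to a point or a segment, as one would expect.

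Combining these local identifications with the product decomposition above and the product-of-face-posets fact yields the theorem. I expect the main obstacle to be the second step: precisely matching each local refinement poset with the face poset of an associahedron or cyclohedron, in particular verifying that the marked vertex of $S_v$ really does linearize the cyclic data in the required way, and handling the low-valence degenerate cases uniformly. Once this local dictionary is in place, the product structure from the first step assembles $\Gamma^{\downarrow}$ as the face poset of a product of associahedra (one per marked vertex of $\Gamma$) and cyclohedra (one per unmarked vertex of $\Gamma$).
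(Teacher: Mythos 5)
Your overall architecture---decomposing $\low(\Gamma)$ into a product of local posets indexed by the interior vertices of $\Gamma$ and then identifying each factor with the face poset of an associahedron or cyclohedron---is exactly the strategy of the paper's proof of Theorem~\ref{thm:poset-product}, which reduces everything to the star neighborhoods of interior vertices via Lemmas~\ref{lem:star-assoc} and~\ref{lem:star-cyclo}. However, your identification of the local factors is \emph{reversed}, and the heuristic you offer for it is the source of the error. An \emph{unmarked} vertex of valence $k$ contributes an \emph{associahedron}: the elements of $\low(\ustar_k)$ are planar trees with $k$ cyclically ordered marked leaves and all interior vertices unmarked of valence at least three, and these are precisely the dual trees of partial triangulations of a $k$-gon, i.e.\ the faces of $K_{k-1}$ (Lemma~\ref{lem:star-assoc}). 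A cyclic order on the external half-edges does not by itself signal a cyclohedron; the polygon model of the associahedron is already cyclic. A \emph{marked interior} vertex contributes a \emph{cyclohedron} (Lemma~\ref{lem:star-cyclo}): the marked point is not a root that linearizes the cyclic data, because it lies in the interior of the expansion and can occupy many combinatorially distinct positions relative to the cyclically arranged external half-edges, and recording that position is exactly the extra datum that enlarges the associahedron to the cyclohedron (realized in the paper as the quotient of centrally symmetric dissections).

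A concrete check shows your assignment cannot be repaired by reindexing. For a marked interior vertex of valence $2$, the local poset has three elements: the star itself, plus the two planar expansions in which the marked point hangs off a new unmarked trivalent vertex on one side or the other. This is the face poset of a segment, $W_2$, whereas any associahedron on two external half-edges is a point. Dually, an unmarked vertex of valence $3$ admits no nontrivial expansion at all (any splitting would create an unmarked vertex of valence at most $2$), so its local poset is a single element, the face poset of the point $K_2$, rather than the $13$-element face poset of the hexagon $W_3$ that your assignment would predict; with your labels even the dimensions fail to add up. Two smaller points: the product should run over \emph{interior} vertices only (marked leaves contribute trivial factors, since no expansion at a leaf can avoid creating an illegal unmarked vertex of low valence), and the "independence of local choices" step, which you assert, is the part of the paper's proof that requires the explicit bijection and order-embedding argument via Corollary~\ref{cor:subforests}.
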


	Given a fixed set of points $z_1,\ldots,z_n$ in the complex plane, analogously
	define a \emph{planted $n$-tree} to be the \emph{relative} isotopy class
	of a planar embedding $\Gamma\to\mathbb{C}$ where the marked vertex
	$v_i$ is sent to $z_i$ and the images of the marked points are fixed under 
	isotopy. One can similarly define a partial order by contraction on the set of
	planted $n$ trees and refer to it as the \emph{planted tree poset}.
	
	The inspiration for studying $n$-trees comes in part from 
	complex dynamics. The \emph{Hubbard tree} for a postcritically finite
	polynomial \cite{douady-hubbard-1,douady-hubbard-2} can be viewed
	as representing a planted $n$-tree where the $n$ marked points
	correspond to the postcritical set for the polynomial. 
	In a recent article on complex dynamics by Belk, Lanier, Margalit, and Winarski 
	\cite{belk}, the authors define the \emph{(simplicial) tree complex} as 
	the geometric realization of the planted tree poset and
	use this complex to study Hubbard trees. 
	
	The pure mapping class group for the $n$-punctured plane acts naturally
	on the set of planted $n$-trees (and thus
	the simplicial tree complex), and there is a one-to-one correspondence
	between the orbits of this action and the set of planar $n$-trees. Moreover,
	this action is equivariant with respect to the partial order,
	so each lower set in the planted tree poset may be
	interpreted using Theorem~\ref{thma:face-poset-polytope}. As a 
	consequence, the simplicial tree complex can be viewed as the
	result of subdividing a simpler polyhedral cell complex.
	
	\begin{thma}[Theorem~\ref{thm:polyhedral-tree-complex}]\label{thma:polyhedral-tree-complex}
		The planted tree poset is the face poset of a
		cell complex which we call the polyhedral tree complex. Each
		cell can be expressed as a product of associahedra and cyclohedra;
		in particular, the top dimensional cells are all products of cyclohedra. 
		Furthermore, the simplicial tree complex is the
		barycentric subdivision of the polyhedral tree complex.
	\end{thma}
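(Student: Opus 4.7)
The plan is to leverage Theorem~\ref{thma:face-poset-polytope} by transferring the polytope structure on lower sets from the planar tree poset to the planted tree poset via the equivariant orbit map. The first step is to observe that the surjection sending a planted $n$-tree to its underlying planar $n$-tree is order-preserving (a contraction in the planted setting descends to a contraction of the planar tree) and restricts to a poset isomorphism on every lower set: a contraction of the planar image lifts uniquely, once a planted representative has been fixed, because the mapping class group action is free on each chain of contractions rooted at $\Gamma$. Consequently the lower set of each planted $n$-tree $\Gamma$ is the face poset of a convex polytope $P(\Gamma)$, which by Theorem~\ref{thma:face-poset-polytope} factors as a product of associahedra and cyclohedra.

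Next I would assemble these polytopes into a cell complex. For each planted $n$-tree $\Gamma$, attach a copy of $P(\Gamma)$, interpreting $\Gamma$ itself as the top-dimensional face. Whenever $\Gamma' < \Gamma$, the face of $P(\Gamma)$ indexed by $\Gamma'$ has face poset equal to the lower set of $\Gamma'$ viewed as a subposet of the lower set of $\Gamma$, and Theorem~\ref{thma:face-poset-polytope} applied to $\Gamma'$ identifies this face combinatorially with $P(\Gamma')$. Gluing the polytopes along these matching faces produces a regular cell complex whose face poset is precisely the planted tree poset. The top-dimensional cells correspond to maximal planted trees, i.e. those admitting no non-trivial contraction; equivalently, every subtree with at most one marked vertex is a single vertex, which forces every vertex of $\Gamma$ to be marked. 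The product decomposition in Theorem~\ref{thma:face-poset-polytope} should assign an associahedron factor for each unmarked vertex and a cyclohedron factor for each marked vertex, so that the top cells are products of cyclohedra alone.

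For the final claim, the simplicial tree complex is by definition the order complex of the planted tree poset. A standard fact about regular CW complexes asserts that the order complex of the face poset is canonically the barycentric subdivision of the complex; applied to the polyhedral tree complex just constructed, this yields the stated identification.

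The main obstacle I anticipate is verifying that the gluing in the middle step really produces a well-defined regular cell complex. Concretely, whenever $\Gamma'' < \Gamma' < \Gamma$, one must show that the identification of $P(\Gamma'')$ as a face of $P(\Gamma')$ and as a face of $P(\Gamma)$ agree, so that the product-of-polytopes decompositions restrict naturally along contractions. I expect this compatibility to follow from an explicit vertex-by-vertex description of the factors appearing in Theorem~\ref{thma:face-poset-polytope}, since any contraction acts on the factorization by contracting the factors corresponding to the affected vertices and leaving the others untouched.
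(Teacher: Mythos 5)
Your proposal is correct and takes essentially the same route as the paper: the transfer of lower sets along the order-preserving orbit map (with uniqueness of lifts coming from the freeness of the $\PMod(\mathbb{C}_P)$-action) is exactly Lemmas~\ref{lem:tree-lifting} and \ref{lem:planted-planar-iso}, the product decomposition of each cell is Theorem~\ref{thm:poset-product}, and the barycentric subdivision claim is the standard fact relating order complexes to face posets of regular polytopal complexes. Your observation that maximal planted trees are those with every vertex marked, so that only cyclohedron factors survive, is precisely the reasoning the paper leaves implicit.
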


	Finally, define a planted $n$-tree to be \emph{reduced} 
	if it has no edges between unmarked vertices. This leads to an equivalence 
	relation on the set of planted  $n$-trees by declaring that two trees are 
	equivalent if contracting all the edges between unmarked vertices in 
	each one yields the same reduced tree.
	Each equivalence class for this relation is then canonically labeled by
	a reduced tree, and the original partial order on planted $n$-trees 
	induces a partial order on the set of equivalence classes. 
	The final theorem examines the structure of this poset and demonstrates
	a connection with the noncrossing hypertree poset introduced in \cite{hypertrees}.

	\begin{thma}[Theorems~\ref{thm:lower-reduced} and \ref{thm:upper-reduced}]
		\label{thma:reduced}
		Let $\Gamma$ be a reduced $n$-tree with equivalence class $[\Gamma]$.
		Then the lower set of $[\Gamma]$ 
		is isomorphic to the face poset for a product of simplices and the upper set of 
		$[\Gamma]$ is isomorphic to a product of noncrossing hypertree posets. 
	\end{thma}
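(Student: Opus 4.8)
The plan is to treat the two statements separately but by a common mechanism: localize at the vertices of $\Gamma$ and exhibit the relevant order ideal (for the lower set) or filter (for the upper set) as a product indexed by those vertices. The first observation I would record is that, relative to the reduced type, the two directions are controlled by disjoint sets of vertices. Going up (contracting) can only alter the local picture at an \emph{unmarked} vertex, since no marked--marked edge may be contracted (such an edge spans two marked vertices) and contracting an edge at an unmarked vertex turns its incident star into marked--marked edges. Going down (expanding) can only change the reduced type at a \emph{marked} vertex, because expanding an unmarked vertex produces unmarked--unmarked edges that are immediately undone by reduction, leaving the equivalence class unchanged. This dichotomy is what makes a clean product decomposition plausible in each direction.

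For the upper set I would fix an unmarked vertex $u$ of $\Gamma$, whose incident edges meet the rest of $\Gamma$ in a cyclically ordered set of marked neighbours; this is exactly one hyperedge of the noncrossing hypertree associated to $\Gamma$. Passing to a blown-up representative and then contracting and reducing replaces the single star at $u$ by an arbitrary noncrossing hypertree on that cyclically ordered set, the cyclic (planar) data coming from the planted embedding. First I would set up the bijection between the filter above $[\Gamma]$ and tuples of noncrossing hypertrees, one for each unmarked vertex of $\Gamma$, and check that the induced order restricts on each factor to the refinement order of the noncrossing hypertree poset of \cite{hypertrees}. The key point is \emph{independence}: a modification performed at $u$ only changes the neighbourhood of $u$ and leaves the stars at all other unmarked vertices intact, so the filter splits as the asserted product.

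For the lower set I would fix a marked vertex $v$ of valence $d$ and analyze its expansions. Expanding $v$ inserts unmarked vertices, each absorbing a cyclically consecutive block of the edges at $v$; after reduction only the top layer survives, so the surviving data is a partition of the edges at $v$ into consecutive blocks, equivalently a choice of which of the ``gaps'' at $v$ are cut, and the induced order is refinement of these block structures. Here I would invoke Theorem~\ref{thm:poset-product} to obtain the local associahedral (type $A$) model at $v$ and then observe that passing to the reduced quotient forgets the nested bracketing, collapsing that associahedron to the Boolean poset of gap-subsets, i.e. to the face poset of a simplex $\Delta^{\,d-2}$. Independence across the marked vertices then yields the face poset of the product of these simplices.

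The hard part will be the product decomposition itself: proving that the induced order on equivalence classes \emph{is} the product of the local orders, rather than merely surjecting onto it. Concretely, I expect to need a normal form for representatives of each class and a careful argument that reductions and contractions performed near different vertices commute and do not interfere, so that the quotient order both respects and reflects the coordinatewise order. A secondary obstacle, which I would address with explicit bookkeeping of the cyclic orders coming from the planar structure, is verifying that the local factors are exactly the stated ones -- the \emph{full} noncrossing hypertree poset at each unmarked vertex, and a genuine simplex (and not a larger polytope) at each marked vertex once the nesting is discarded. Once these two points are settled, Theorems~\ref{thm:lower-reduced} and \ref{thm:upper-reduced} follow by taking products over the marked and the unmarked vertices respectively.
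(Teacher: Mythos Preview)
Your overall architecture---localize at the interior vertices of $\Gamma$, show that expansions at distinct vertices are independent, and assemble the lower and upper sets as products---is exactly the paper's strategy (Theorems~\ref{thm:lower-reduced} and~\ref{thm:upper-reduced}), and your treatment of the upper set via noncrossing hypertrees at each unmarked vertex matches Lemma~\ref{lem:upper-ustar-reduced}. The dichotomy you state (only marked vertices matter downward, only unmarked vertices matter upward) is also correct and is precisely how the paper trivializes the irrelevant factors.

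There is, however, a concrete error in your local analysis at a \emph{marked} interior vertex $v$ of valence $d$. First, Theorem~\ref{thm:poset-product} assigns to $v$ a \emph{cyclohedron} $W_d$, not an associahedron: the edges at $v$ are cyclically ordered by the planar embedding, and there is no distinguished break in that cycle. Second, and relatedly, your proposed invariant ``a partition of the edges at $v$ into consecutive blocks'' is not the right one. Cyclic interval partitions of $d$ elements number only $2^d - d$ and do not form a Boolean lattice, whereas the paper shows (Lemma~\ref{lem:lower-mstar-reduced}) that $\low([\mstar_{d+1}]) \cong \bool_d^\ast$, the face poset of $\Delta^{d-1}$, not $\Delta^{d-2}$. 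The correct invariant is which of the $d$ complementary \emph{regions} around $v$ in the disk are actually incident to $v$; this is a nonempty subset of a $d$-element set, and contractions, slides, and splits enlarge it. The point is that the same cyclic block partition of the edges can arise from several inequivalent reduced expansions (for example, a single direct edge versus that edge routed through an adjacent unmarked vertex on either side), and the region-incidence data distinguishes them. Once you replace ``consecutive-block partition'' by ``subset of incident regions'' and correct associahedron to cyclohedron, your product argument goes through and recovers the paper's $\prod_i \bool_{\val(v_i)}^\ast$.
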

	
	The combinatorial transition from planted trees to reduced trees
	has a topological interpretation in which the polyhedral tree complex is
	analogously transformed into a polysimplicial complex. This resulting 
	complex is closely connected to both the cactus complex \cite{nekrashevych14} 
	and the dual braid complex \cite{brady01,brady10};
	the details will be given in a future article.
	
	This article begins with some preliminaries on posets in Section~1, 
	followed in Section~2 by the introduction of planar $n$-trees and their
	contractions. Section~3 defines associahedra and cyclohedra and
	uses them to prove Theorem~\ref{thma:face-poset-polytope}.
	Section~4 concerns planted $n$-trees and the proof of 
	Theorem~\ref{thma:polyhedral-tree-complex}. Finally, reduced
	$n$-trees and are discussed and Theorem~\ref{thma:reduced} is
	proven in Section~5. \\
	
	\section{Preliminaries}
	
	We begin with a few useful facts and examples regarding
	partially ordered sets and their associated cell complexes.
	Given a cell complex $X$, the \emph{face poset} $P(X)$ is
	the set of all nonempty faces of $X$, partially ordered
	by inclusion. Given a poset $P$, the \emph{order complex} 
	(or \emph{geometric realization}) $\Delta(P)$
	is the simplicial complex with vertices corresponding to
	elements of $P$ and a $k$-simplex on vertices $x_1,\ldots, x_{k+1}$
	for each chain $x_1 \leq \cdots \leq x_{k+1}$ in $P$. 
	These two operations are not quite inverses of one another, but
	they are closely related: if $X$ is a
	polytopal cell complex (its cells are convex polytopes which intersect in
	smaller-dimensional convex polytopes), 
	then the simplicial complex $\Delta(P(X))$ is the
	barycentric subdivision of $X$.
	For an in-depth exploration of these tools, see \cite{wachs07}.
	
	The	\emph{Boolean lattice}, denoted $\bool_n$, is the set of all subsets of
	$\{1,2,\ldots,n\}$, partially ordered by inclusion.
	As a useful shorthand, let $\bool_n^\ast$ denote
	the subposet of all nonempty subsets.
	
	\begin{example}\label{ex:boolean-simplex}
		The $n$-dimensional simplex may be realized as the set of 
		all points $(x_1,\ldots,x_{n+1}) \in \mathbb{R}^{n+1}$ with 
		positive entries such that $x_1 + \cdots + x_{n+1} = 0$. Then
		each face of dimension $k$ may be described by specifying
		a nonempty subset of $k$ coordinates which are nonzero, and it 
		follows that the face poset of the $n$-simplex is isomorphic 
		to $\bool_n^\ast$. Furthermore, the order complex of 
		$\bool_n^\ast$ is a barycentrically subdivided $n$-simplex. 
		Each subset of size $k$ labels the barycenter of a 
		$k$-dimensional face, and each of the $n!$ maximal chains in 
		$\bool_n^\ast$ labels an $n$-simplex in the new simplicial 
		cell structure. See Figure~\ref{fig:boolean_simplex} for an 
		example in dimension $2$.
	\end{example}
	
	\begin{figure}
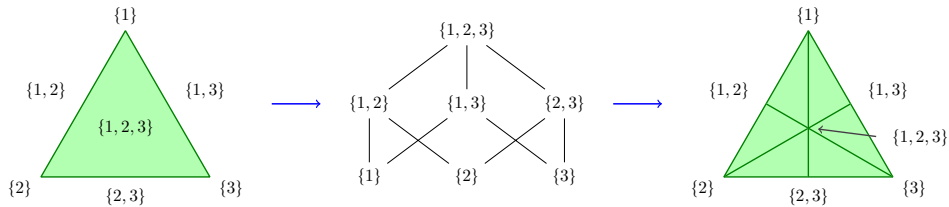

		\centering
		\includestandalone[width=\textwidth]{figures/boolean_simplex}
		\caption{The face poset of a $2$-simplex is isomorphic to 
			$\bool_3^\ast$, and its order complex is a barycentrically
			subdivided $2$-simplex.\label{fig:boolean_simplex}}
	\end{figure}
	
	\begin{defn}\label{def:upper-lower}
		If $P$ is a poset with $a,b\in P$ and $a\leq b$,
		then the \emph{interval} between $a$ and $b$, denoted $[a,b]$, 
		is the subposet of all $x\in P$ such that $a\leq x\leq b$.
		The \emph{upper set} of $a$, denoted $\upper(a)$, is the
		set of all $x \in P$ such that $a \leq x$, and the
		\emph{lower set} of $b$, denoted $\low(b)$, is the
		set of all $x \in P$ such that $x \leq b$.
	\end{defn}
	
	\begin{example}
		Let $A$ and $B$ be elements of $\bool_n$ such that $A \leq B$, 
		$|A| = k$, and $|B| = \ell$. 
		Then the interval $[A,B]$ consists of all subsets of $B$ 
		which contain $A$, and so $[A,B]$ is isomorphic to $\bool_{\ell-k}$. 
		Similarly, the lower set $\low(A)$ is isomorphic to $\bool_k$ and 
		the upper set $\upper(A)$ is isomorphic to $\bool_{n-k}$.
	\end{example}
	
	\section{Planar Trees}
	
	This section introduces planar $n$-trees and an associated partial order.
	
	\begin{defn}
		A \emph{tree} is a finite, contractible, 
		$1$-dimensional simplicial complex. Connected subcomplexes
		of a tree are \emph{subtrees} and a collection of 
		disjoint subtrees is a \emph{subforest}.
		The number of edges incident to a vertex $v$ is
		referred to as its \emph{valence} $\val(v)$; a vertex is a
		\emph{leaf} if it has valence $1$ and an \emph{interior vertex}
		otherwise. Given any two vertices $v_1$ and $v_2$ in a tree,
		there is a unique path subcomplex from $v_1$ to $v_2$, which
		we refer to as a \emph{geodesic} and denote by $\gamma(v_1,v_2)$.
	\end{defn}
		
	\begin{defn}
		Given an integer $n\geq 3$, a \emph{marked $n$-tree} 
		(or simply an \emph{$n$-tree}) consists of a tree 
		$\Gamma$ with a collection of marked vertices 
		$v_1,\ldots,v_n$ which includes each vertex of valence
		$1$ or $2$ in $\Gamma$; see Figure~\ref{fig:n-tree} for an example
		with $n=6$.		
		Since every leaf of an $n$-tree is labeled,
		the identity map is the only 
		isomorphism between $n$-trees which preserves the labels on marked
		vertices. Using the Euler characteristic, one can show
		that an $n$-tree has between $n$ and $2n-2$ vertices.
	\end{defn}

	\begin{figure}
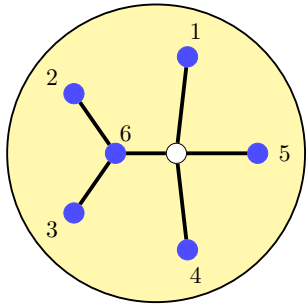

		\centering
		\includestandalone[scale=0.9]{figures/n-tree}
		\caption{A marked $6$-tree}
		\label{fig:n-tree}
	\end{figure}
	
	\begin{defn}
		Let $\Gamma_1$ and $\Gamma_2$ be $n$-trees with marked vertices
		$v_1,\ldots,v_n$. A \emph{contraction}
		$f\colon\Gamma_1\to\Gamma_2$ is a surjective cellular map with
		the property that, for all $i$, $f^{-1}(v_i)$ is a subtree of $\Gamma_1$
		which contains $v_i$.
		Regarding contractions up to isotopy within each edge, each
		such map can be determined in a purely combinatorial manner.
		In other words, a contraction is obtained from an $n$-tree
		by specifying some number of subtrees, each of which contains
		at most one marked vertex, and retracting each subtree to a point.
	\end{defn}
	
	In more general cases (e.g. if unmarked vertices of
	valence $2$ were allowed), there may be several different contractions from one 
	given tree to another. With $n$-trees, however, the existence of 
	contractions is far more restrictive.
	
	\begin{lem}\label{lem:unique-contraction}
		Let $\Gamma_1$ and $\Gamma_2$ be $n$-trees. If there is a
		contraction $\Gamma_1 \to \Gamma_2$, then it is unique.
		Consequently, contracting two different subforests of the
		same $n$-tree must result in different trees.
	\end{lem}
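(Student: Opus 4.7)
The plan is to show that a contraction $f\colon \Gamma_1\to\Gamma_2$ is completely determined by $\Gamma_1$ and $\Gamma_2$ by pinning down the image of every vertex of $\Gamma_1$ intrinsically. Once the vertex images are fixed, each edge of $\Gamma_1$ is either contracted (when its endpoints share an image) or maps onto the unique edge of $\Gamma_2$ joining those images, so two contractions agreeing on vertices agree everywhere. The consequence about subforests then follows at once: distinct subforests yield combinatorially distinct contractions, so if uniqueness holds, the resulting trees must differ.

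For marked vertices, the condition $v_i\in f^{-1}(v_i)$ immediately gives $f(v_i)=v_i$. To handle the remaining vertices I would first show that $f$ carries the geodesic $\gamma(v_i,v_j)$ in $\Gamma_1$ onto the geodesic $\gamma(v_i,v_j)$ in $\Gamma_2$ for every pair of marked vertices. The key point is that each preimage $f^{-1}(u)$ is a subtree of $\Gamma_1$, so its intersection with the path $\gamma(v_i,v_j)$ is connected; hence the image visits each vertex of $\Gamma_2$ in one contiguous stretch, producing a simple path from $v_i$ to $v_j$, which in a tree must be the geodesic.

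Applying this observation to any three marked vertices $v_i,v_j,v_k$, whose pairwise geodesics in $\Gamma_1$ meet at a unique median vertex $m_1$, I find that $f(m_1)$ lies on each of the three corresponding geodesics in $\Gamma_2$ and therefore equals their unique common vertex, the median $m_2$ in $\Gamma_2$. Consequently, any vertex of $\Gamma_1$ that can be realized as the median of three marked vertices has a forced image under $f$, and the task reduces to showing that every vertex of $\Gamma_1$ is such a median.

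The main obstacle is this last reduction. Marked vertices are trivially medians of themselves with anything else. For an unmarked vertex $w$ of valence $k\geq 3$ I would prove the structural claim that each of the $k$ components of $\Gamma_1\setminus\{w\}$ contains at least one marked vertex: if some component consisted entirely of unmarked vertices, then any leaf of that component would have only one neighbor inside the component plus at most the edge to $w$, giving valence at most $2$ in $\Gamma_1$ and contradicting the requirement that every valence-$1$ or valence-$2$ vertex is marked. Choosing a marked vertex from each of three distinct components realizes $w$ as their median, completing the proof that $f$ is determined by $\Gamma_1$ and $\Gamma_2$; the subforest consequence follows immediately from uniqueness.
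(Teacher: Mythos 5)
Your proof is correct and follows essentially the same route as the paper: pin down $f$ on marked vertices by definition, then realize each unmarked vertex as the median of three marked vertices lying in distinct components of its complement, so that its image is forced to be the corresponding median in $\Gamma_2$. You additionally spell out two steps the paper leaves implicit (that contractions carry geodesics to geodesics, and that every component of $\Gamma_1\setminus\{w\}$ contains a marked vertex), both of which are argued correctly.
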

	
	\begin{proof}
		Suppose that $f\colon \Gamma_1 \to \Gamma_2$ is a contraction. 
		By definition, $f$ sends each marked vertex in $\Gamma_1$
		to the corresponding marked vertex in $\Gamma_2$. We will show that
		the image of each unmarked vertex is also completely determined.
		
		If $u$ is an unmarked vertex, then we know that the 
		valence of $u$ is at least three, and so the complement of $u$ in 
		$\Gamma_1$ consists of at least three connected 
		components. Fix $v_i$, $v_j$, and $v_k$ to be marked vertices in three
		distinct connected components of $\Gamma_1 - \{u\}$. By construction,
		we know that $u$ is the unique vertex which lies in the common 
		intersection of the geodesics $\gamma(v_i,v_j)$, $\gamma(v_j,v_k)$, and $\gamma(v_i,v_k)$;
		if there were another, then $\Gamma_1$ would have a cycle.
		
		Now, since $f$ is a contraction, we know that $f$ sends $v_i$, $v_j$, and $v_k$
		to the corresponding vertices in $\Gamma_2$ and that the images 
		$f(\gamma(v_i,v_j))$, $f(\gamma(v_j,v_k))$, and 
		$f(\gamma(v_i,v_k))$ yield three paths
		between these three vertices. Since $\Gamma_2$ is also a tree, the 
		common intersection of these paths contains a single point,
		which must be $f(u)$. Thus, the image of each vertex under $f$ is completely determined by
		$\Gamma_1$ and $\Gamma_2$, so the contraction is unique.
	\end{proof}

	As a consequence of Lemma~\ref{lem:unique-contraction},
	contractions of a given $n$-tree correspond exactly to subforests
	for which each subtree contains at most one marked point.

	\begin{defn}\label{def:singly-marked}
		A subtree of an $n$-tree $\Gamma$ is \emph{unmarked} if it contains
		no marked vertices and \emph{singly-marked} if it contains exactly one.
		That is, a subforest of $\Gamma$ corresponds to a contraction of $\Gamma$
		if and only if its components are all unmarked or singly-marked subtrees.
		If $\Gamma_1 \to \Gamma_2$ is a contraction of $n$-trees, 
		define $F(\Gamma_1,\Gamma_2)$ to be the unique subforest of $\Gamma_1$
		consisting of unmarked and/or singly-marked subtrees 
		which can be contracted to obtain $\Gamma_2$.
	\end{defn}

	Before moving on, we record a useful corollary of 
	Lemma~\ref{lem:unique-contraction} and Definition~\ref{def:singly-marked}.

	\begin{cor}\label{cor:subforests}
		Let $\Gamma$, $\Gamma_1$, and $\Gamma_2$ be $n$-trees.
		\begin{enumerate}
			\item If there are contractions 
			$\Gamma_1 \to \Gamma$ and $\Gamma_2 \to \Gamma$, then
			there is a contraction $\Gamma_1\to \Gamma_2$ if and only if
			$F(\Gamma_2,\Gamma)$ is a subforest of $F(\Gamma_1,\Gamma)$.
			\item If there are contractions 
			$\Gamma \to \Gamma_1$ and $\Gamma \to \Gamma_2$, then
			there is a contraction $\Gamma_1\to \Gamma_2$ if and only if
			$F(\Gamma,\Gamma_1)$ is a subforest of $F(\Gamma,\Gamma_2)$.
		\end{enumerate}
	\end{cor}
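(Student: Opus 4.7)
The plan is to exploit the uniqueness statement from Lemma~\ref{lem:unique-contraction} together with the elementary observation that contractions compose: if $f\colon\Gamma_1\to\Gamma_2$ and $g\colon\Gamma_2\to\Gamma_3$ are contractions, then so is $g\circ f$, because the preimage under $g\circ f$ of a marked vertex is the $f$-preimage of a subtree containing that vertex, hence again a subtree containing it. Combined with uniqueness, this means that whenever three $n$-trees admit pairwise contractions, the composite of any two must equal the third. Throughout, I identify subforests of $\Gamma_2$ with subforests of $\Gamma_1$ via the edge-by-edge correspondence induced by a contraction $\Gamma_1\to\Gamma_2$, under which each uncontracted edge of $\Gamma_1$ lifts the unique corresponding edge of $\Gamma_2$.

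For part~(2), both $F(\Gamma,\Gamma_1)$ and $F(\Gamma,\Gamma_2)$ are genuine subforests of $\Gamma$. If $\Gamma_1\to\Gamma_2$ is a contraction, then $\Gamma\to\Gamma_1\to\Gamma_2$ must equal the given $\Gamma\to\Gamma_2$ by uniqueness, and since contracting $F(\Gamma,\Gamma_1)$ is only the first step of this composition, every edge of $F(\Gamma,\Gamma_1)$ lies in $F(\Gamma,\Gamma_2)$. Conversely, suppose $F(\Gamma,\Gamma_1)\subseteq F(\Gamma,\Gamma_2)$. Each component $T$ of $F(\Gamma,\Gamma_2)$ is a singly-marked or unmarked subtree that contains some components of $F(\Gamma,\Gamma_1)$, each of which is also singly-marked or unmarked. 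Collapsing those smaller components inside $T$ produces a subtree of $\Gamma_1$ that retains the singly-marked-or-unmarked property, so the image of $F(\Gamma,\Gamma_2)$ in $\Gamma_1$ is a valid contraction-defining subforest in the sense of Definition~\ref{def:singly-marked}; contracting it yields an $n$-tree which by Lemma~\ref{lem:unique-contraction} must coincide with $\Gamma_2$.

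For part~(1), the intended interpretation of ``$F(\Gamma_2,\Gamma)$ is a subforest of $F(\Gamma_1,\Gamma)$'' is obtained through the contractions to the common target $\Gamma$: each component of $F(\Gamma_j,\Gamma)$ is indexed by the vertex of $\Gamma$ to which it contracts, and under the hypothesis, the component of $F(\Gamma_2,\Gamma)$ at each such vertex $v$ embeds as a subforest into the component of $F(\Gamma_1,\Gamma)$ at the same $v$. For the forward direction, the factorization $\Gamma_1\to\Gamma_2\to\Gamma$ partitions the edges contracted by $\Gamma_1\to\Gamma$ into those already contracted by $\Gamma_1\to\Gamma_2$ and those lifted from $F(\Gamma_2,\Gamma)$, realizing $F(\Gamma_2,\Gamma)$ as a subforest of $F(\Gamma_1,\Gamma)$. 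For the converse, I remove the edges identified with $F(\Gamma_2,\Gamma)$ from $F(\Gamma_1,\Gamma)$, check that the resulting subforest of $\Gamma_1$ still consists of singly-marked and unmarked subtrees, and contract it; uniqueness then forces the resulting $n$-tree to be $\Gamma_2$.

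The main obstacle will be the converse of part~(1): giving a precise, contraction-independent meaning to ``subforest of $F(\Gamma_1,\Gamma)$'' when applied to the a~priori unrelated object $F(\Gamma_2,\Gamma)$. The right framework is to label components of each $F(\Gamma_j,\Gamma)$ by their target vertices in $\Gamma$, interpret the hypothesis componentwise, and then verify that after removing $F(\Gamma_2,\Gamma)$ the residual subforest of $\Gamma_1$ still has only singly-marked or unmarked components, so that Definition~\ref{def:singly-marked} produces the desired contraction $\Gamma_1\to\Gamma_2$.
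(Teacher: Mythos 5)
The paper records this corollary with no written proof, treating it as immediate from Lemma~\ref{lem:unique-contraction} and Definition~\ref{def:singly-marked}, so there is no authorial argument to compare against; your part~(2) (both directions) and the forward direction of part~(1) are correct and are surely what is intended. In particular, the observation that contractions compose and that uniqueness forces the composite $\Gamma\to\Gamma_1\to\Gamma_2$ to coincide with the given $\Gamma\to\Gamma_2$ is exactly the right mechanism, and your verification that the image of $F(\Gamma,\Gamma_2)$ in $\Gamma_1$ still has only unmarked or singly-marked components is the one nontrivial check in part~(2).

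The converse of part~(1), however, has a genuine gap, and you have correctly located where it is but not repaired it. Your proposed reading --- that for each vertex $v$ of $\Gamma$ the component of $F(\Gamma_2,\Gamma)$ over $v$ merely \emph{embeds} into the component of $F(\Gamma_1,\Gamma)$ over $v$ --- is too weak, and with that reading the claim is false, so no amount of appealing to uniqueness can rescue the final step. Concretely, let $\Gamma$ be the unmarked star with center $u$ and leaves $v_1,v_2,v_3,v_4$, let $\Gamma_1$ expand $u$ into an unmarked edge $u_au_b$ with $v_1,v_2$ attached to $u_a$ and $v_3,v_4$ to $u_b$, and let $\Gamma_2$ expand $u$ into an unmarked edge with $v_1,v_3$ on one end and $v_2,v_4$ on the other. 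Both $F(\Gamma_1,\Gamma)$ and $F(\Gamma_2,\Gamma)$ are single unmarked edges over the same vertex $u$, so one embeds into the other componentwise, yet no contraction $\Gamma_1\to\Gamma_2$ exists (both trees have six vertices, so such a contraction would be a label-preserving isomorphism, and there is none). Your recipe of deleting the identified edges and contracting the rest here contracts nothing and returns $\Gamma_1$, not $\Gamma_2$; Lemma~\ref{lem:unique-contraction} only says a contraction between two \emph{given} trees is unique, not that two trees contracting to $\Gamma$ along abstractly isomorphic subforests must coincide. The missing idea is that ``$F(\Gamma_2,\Gamma)$ is a subforest of $F(\Gamma_1,\Gamma)$'' must be interpreted with the incident edges attached: over each vertex $v$ of $\Gamma$, the neighborhood of the component of $F(\Gamma_2,\Gamma)$ must be a \emph{contraction} of the neighborhood of the component of $F(\Gamma_1,\Gamma)$, compatibly with where the lifts of the edges of $\Gamma$ attach. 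This is exactly how the paper uses part~(1) in the proof of Theorem~\ref{thm:poset-product} (where the condition becomes $\Gamma^{\prime\prime}_{u_i}\leq\Gamma^{\prime}_{u_i}$ in $\low(\ustar_{\val(u_i)})$, etc.); only with that stronger, edge-attachment-aware hypothesis does your deletion-and-contraction construction actually produce $\Gamma_2$.
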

	
	We are interested in two types of planar embeddings for 
	marked $n$-trees. First, we consider the case of planar
	embeddings up to free isotopy. In Section~\ref{sec:tree-complexes},
	we instead examine embeddings up to relative isotopy fixing the 
	marked points.
	
	\begin{figure}
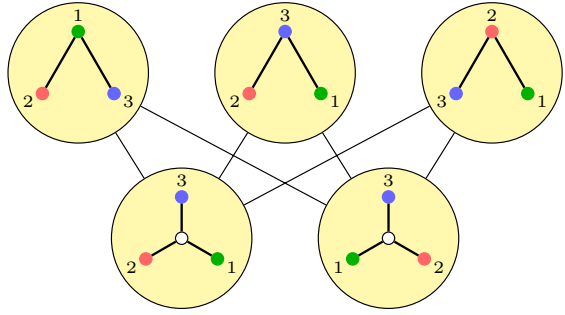

		\centering
		\includestandalone[scale=1.1]{figures/p3_poset}
		\caption{The planar tree poset $\planar_3$}
		\label{fig:p3_poset}
	\end{figure}
	
	\begin{defn}
		Let $\Gamma$ be an $n$-tree. A \emph{planar $n$-tree}
		is a free isotopy class of planar embeddings 
		$\phi\colon \Gamma \to \mathbb{C}$. That is,
		a planar $n$-tree is just an $n$-tree together with
		a cyclic counter-clockwise ordering of the edges 
		incident to each vertex. When the context is clear, 
		we omit explicit references to embeddings or markings, and instead refer to 
		a planar $n$-tree simply as $\Gamma$.
	\end{defn}

	Planar $n$-trees are distinct from those studied in \cite{billera01},
	for example, since we include the ordering of edges at each
	vertex.
	
	\begin{defn}\label{def:contraction-planar}
		Let $\phi_1\colon\Gamma_1\to \mathbb{C}$ and
		$\phi_2\colon\Gamma_2\to\mathbb{C}$ be planar $n$-trees.
		If $f\colon\Gamma_1\to\Gamma_2$ is a contraction
		such that $\phi_1$ and $\phi_2\circ f$ are isotopic, 
		then we say that $f$ is a \emph{planar contraction}.
		If this is the case, we abuse notation to omit the embedding 
		and write $\Gamma_1\leq\Gamma_2$. This determines
		a partial order on the set of all $n$-trees which we 
		call the \emph{planar tree poset} and
		denote by $\planar_n$. See Figure~\ref{fig:p3_poset} for
		an example when $n=3$.
	\end{defn}
	
	\begin{figure}
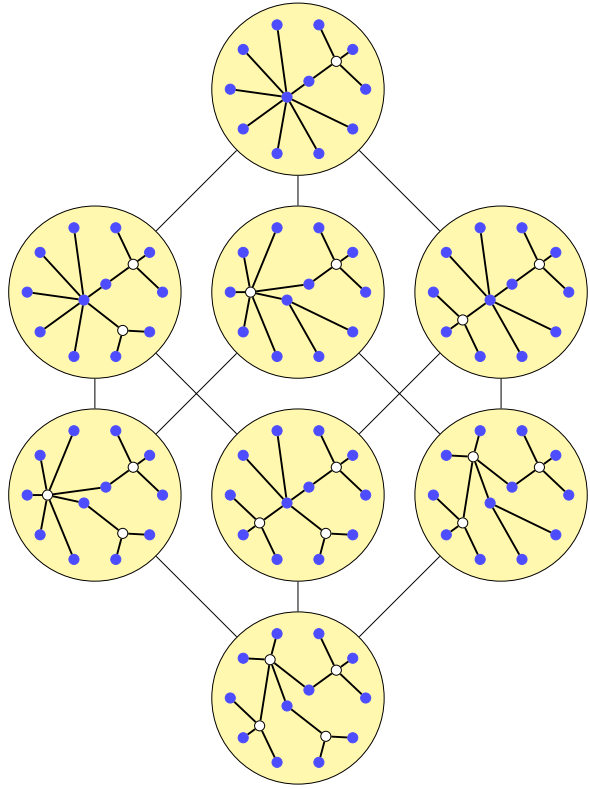

		\centering
		\includestandalone[scale=0.9]{figures/boolean_poset}
		\caption{An interval in $\planar_{12}$. In this figure 
			and several others which follow, we omit the vertex 
			labels and instead distinguish
			marked vertices by their location in the plane.}
		\label{fig:boolean-poset}
	\end{figure}
	
	\begin{thm}
		If $\Gamma_1 \leq \Gamma_2$ in $\planar_n$ and the
		subforest $F(\Gamma_1,\Gamma_2)$ has $k$ edges, then
		the interval $[\Gamma_1,\Gamma_2]$ is
		isomorphic to the Boolean lattice $\bool_k$.
	\end{thm}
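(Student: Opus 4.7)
The plan is to exhibit an explicit poset isomorphism $\phi \colon \bool_k \to [\Gamma_1,\Gamma_2]$, where $\bool_k$ is indexed by subsets of the edge set $E(F)$ of $F = F(\Gamma_1,\Gamma_2)$. Given any $S \subseteq E(F)$, I would form the subforest $F_S \subseteq F$ with edge set $S$, taking the endpoints of those edges as its vertex set. Since each component of $F$ is unmarked or singly-marked, so is every component of $F_S$ (being a subtree of a component of $F$), so by Definition~\ref{def:singly-marked} this subforest corresponds to a valid contraction of $\Gamma_1$, producing an $n$-tree $\Gamma_S$ together with an induced planar embedding. Moreover, the residual edges $E(F) \setminus S$ descend to a subforest of $\Gamma_S$ whose components remain unmarked or singly-marked, and contracting them yields $\Gamma_2$; hence $\Gamma_S$ lies in $[\Gamma_1,\Gamma_2]$.

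Next I would verify that $\phi$ is a bijection. Injectivity is immediate from Lemma~\ref{lem:unique-contraction}: if $S \neq S'$, then $F_S$ and $F_{S'}$ are distinct subforests of $\Gamma_1$, so they contract to distinct $n$-trees, and $\Gamma_S \neq \Gamma_{S'}$. For surjectivity, given any $\Gamma \in [\Gamma_1,\Gamma_2]$, there are contractions $\Gamma_1 \to \Gamma$ and $\Gamma_1 \to \Gamma_2$. Part~(2) of Corollary~\ref{cor:subforests} then forces $F(\Gamma_1,\Gamma)$ to be a subforest of $F$; letting $S$ be its edge set, the uniqueness clause in Definition~\ref{def:singly-marked} gives $\phi(S) = \Gamma$.

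Finally I would show $\phi$ is order-preserving in both directions by proving $\Gamma_S \leq \Gamma_{S'}$ if and only if $S \subseteq S'$. Applying Corollary~\ref{cor:subforests}(2) once more, $\Gamma_S \leq \Gamma_{S'}$ is equivalent to $F(\Gamma_1,\Gamma_S) = F_S$ being a subforest of $F(\Gamma_1,\Gamma_{S'}) = F_{S'}$, which at the level of edges is exactly $S \subseteq S'$.

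The step I expect to require the most care is the planar bookkeeping: verifying that contracting a subforest of a planar $n$-tree yields a canonical planar $n$-tree and that each two-stage contraction $\Gamma_1 \to \Gamma_S \to \Gamma_2$ is genuinely planar in the sense of Definition~\ref{def:contraction-planar}. This should reduce to the observation that contracting a subtree to a point simply splices the cyclic edge orderings at the collapsed vertices, so that Corollary~\ref{cor:subforests} carries over from $n$-trees to $\planar_n$ because planar contractions are just contractions compatible with the ambient embeddings.
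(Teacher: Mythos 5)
Your proposal is correct and follows essentially the same route as the paper: both identify the elements of $[\Gamma_1,\Gamma_2]$ with subsets of the $k$ edges of $F(\Gamma_1,\Gamma_2)$ and invoke Corollary~\ref{cor:subforests}(2) to see that the resulting bijection is an order isomorphism. You simply unpack the bijection and the planarity bookkeeping in more detail than the paper does.
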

	
	\begin{proof}
		Let $\Gamma_1,\Gamma_2\in \planar_n$ with $\Gamma_1\leq \Gamma_2$.
		By the second part of Corollary~\ref{cor:subforests},
		the planar $n$-trees lying between $\Gamma_1$ and $\Gamma_2$ in
		$\planar_n$ correspond exactly to the subforests of $F(\Gamma_1,\Gamma_2)$,
		each of which is chosen by selecting a subset of the $k$ edges.
		This gives a bijection from the interval $[\Gamma_1,\Gamma_2]$ 
		to the Boolean lattice $\bool_k$, and this is an isomorphism 
		since $\Gamma^\prime \leq \Gamma^{\pprime}$ in $[\Gamma_1,\Gamma_2]$
		if and only if $F(\Gamma_1,\Gamma^\prime)$ is a subforest of 
		$F(\Gamma_1,\Gamma^{\pprime})$ by Corollary~\ref{cor:subforests}.
		See Figure~\ref{fig:boolean-poset} for an example.
	\end{proof}
	
	\section{Associahedra and Cyclohedra}
	\label{sec:assoc-cyclo}
	
	The lower sets in the planar tree poset
	are closely related to two important types of
	convex polytopes: associahedra and cyclohedra.
	For an excellent overview of both, 
	see \cite[Section 1]{devadoss03}.
	
	\begin{figure}
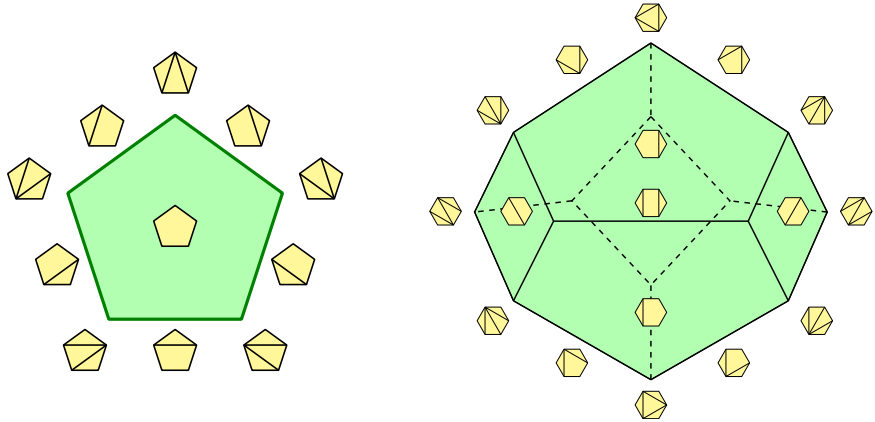

		\centering
		\begin{subfigure}{.5\textwidth}
			\centering
			\includestandalone[scale=1.5]{figures/assoc4_polygon}
		\end{subfigure}%
		\begin{subfigure}{.5\textwidth}
			\centering
			\includestandalone[scale=0.7]{figures/assoc5_polygon}
		\end{subfigure}
		\caption{The associahedra $K_4$ and $K_5$, with some of the
			front-facing cells labeled}
		\label{fig:assoc_poly}
	\end{figure}
	
	\begin{defn}\label{def:triangulation}
		Let $n\geq 3$, define $z_k = e^{i\pi k / n}$ for
		every integer $k$, and let $P_n$ denote the regular $n$-gon
		obtained by taking the convex hull in $\mathbb{C}$ of
		the points $z_1,\ldots,z_n$. A straight line segment between
		non-adjacent vertices of $P_n$ is called a
		\emph{diagonal}, and a \emph{partial triangulation} of
		$P_n$ is a (possibly empty) set of pairwise
		non-intersecting diagonals. 
		The set $\tri_n$ of all partial triangulations of $P_n$
		is a partially ordered set under reverse containment; given
		$\tau_1$ and $\tau_2$ in $\tri_n$, we say that 
		$\tau_1 \leq \tau_2$ if $\tau_1$ contains $\tau_2$. Note 
		that the minimal elements of $\tri_n$ under this partial order 
		are the actual triangulations of $P_n$. Finally, a partial 
		triangulation of $P_{2n}$ is 
		\emph{centrally symmetric} if it is invariant under rotation by $\pi$.
		The set of all centrally symmetric elements, denoted $\overline{\tri}_{n}$,
		is a subposet of $\tri_{2n}$. 
	\end{defn}
	
	\begin{defn}
		The \emph{associahedron} $K_n$ is an $(n-2)$-dimensional convex
		polytope with face poset isomorphic to $\tri_{n+1}$. For example,
		$K_3$ is a line segment, $K_4$ is a pentagon, and
		$K_5$ is a polyhedron with $14$ vertices and $9$
		faces: six pentagons and three squares. See Figure~\ref{fig:assoc_poly}. 
		In general, faces of the associahedron may be expressed as 
		products of lower-dimensional associahedra.
	\end{defn}
	
	The associahedron was initially given a combinatorial description by 
	Tamari in 1951 before being rediscovered in a topological context by 
	Stasheff in the 1960s \cite{stasheff12}. Associahedra and their
	generalizations play important roles in several areas of mathematics, 
	including the studies of cluster algebras \cite{fomin07}, $A_\infty$-algebras
	and $A_\infty$-categories (such as the Fukaya category of a symplectic 
	manifold \cite{auroux14}), and moduli spaces of disks \cite{devadoss98}. 
	In physics, they also appear in the studies of open string theory and 
	scattering amplitudes \cite{arkani-hamed}.
	
	\begin{defn}
		The \emph{cyclohedron} $W_n$ is an $(n-1)$-dimensional convex 
		polytope with face poset isomorphic to $\overline{\tri}_{n}$. For example,
		$W_2$ is a line segment, $W_3$ is a hexagon, and $W_4$ is a 
		polyhedron with $20$ vertices and $12$ faces: four hexagons,
		four pentagons, and four squares. See Figure~\ref{fig:cyclo_poly}. 
		More broadly, faces of the cyclohedron are products of lower-dimensional
		associahedra and (at most one) cyclohedra.
	\end{defn}
	
	\begin{figure}
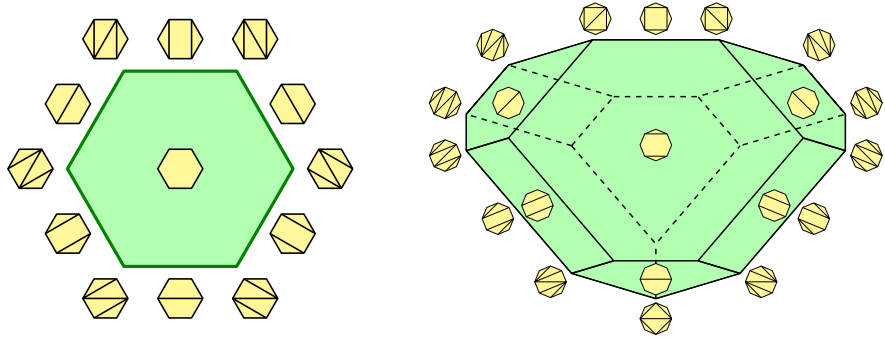

		\centering
		\begin{subfigure}{0.5\textwidth}
			\centering
			\includestandalone[scale=1.5]{figures/cyclo3_polygon}
		\end{subfigure}%
		\begin{subfigure}{0.5\textwidth}
			\centering
			\includestandalone[scale=0.7]{figures/cyclo4_polygon}
		\end{subfigure}
		\caption{The cyclohedra $W_3$ and $W_4$, with some of the
			front-facing cells labeled}
		\label{fig:cyclo_poly}
	\end{figure}
	
	Among the many generalizations of the associahedron, cyclohedra
	are perhaps the closest relative. They first appeared in the context 
	of knot invariants, where they were given a description by Bott and 
	Taubes \cite{bott94}. Realizations of the cyclohedron as a convex polytope
	later followed in the work of Markl \cite{markl99} and Simion \cite{simion03}.
	
	In the planar tree poset, the face posets of associahedra and cyclohedra appear
	as the lower sets of particular types of trees.
	
	\begin{defn}\label{def:stars}
		If $\Gamma$ is a planar $n$-tree with a unique interior vertex, 
		then $\Gamma$ is a \emph{star}. Since each leaf must be marked, there 
		are two types of stars; $\Gamma$ is a \emph{marked star}
		if the unique interior vertex is marked and
		an \emph{unmarked star} otherwise.
		Let $\ustar_n$ denote the unmarked star with $n$ marked 
		points, labeled $1,\ldots,n$ in counter-clockwise order
		around the unique unmarked interior vertex. Let
		$\mstar_n$ denote the marked star with $n$ marked points:
		a single interior vertex labeled $n$, and $n-1$ leaves
		labeled $1,\ldots,n-1$ in counter-clockwise order
		around the unique marked interior vertex. See Figure~\ref{fig:stars}.
	\end{defn}
	
	\begin{rem}\label{rem:star-drawing}
		The trees which lie below $\ustar_{n+1}$ or $\mstar_{n+1}$
		in the partial order admit a canonical planar embedding.
		Let $\Gamma$ be a planar $n$-tree in the lower set $\low(\ustar_{n+1})$.
		Then each marked point of $\Gamma$ is a leaf, so one can
		see that there is a natural representation of $\Gamma$ in $\mathbb{C}$
		where each marked point labeled $i$ is sent to $(z_i+z_{i+1})/2$ 
		(i.e. the midpoint of a side of $P_{n+1}$) and
		the entire tree is contained within the unit disk. 
		Similarly, if $\Gamma$ is a planar $n$-tree in $\low(\mstar_{n+1})$,
		then there is a canonical planar embedding which sends the $n$ 
		(marked) leaves to the $n$ edges of $P_{n+1}$ as above and which sends the
		unique marked interior vertex to the origin. In both cases above, the
		embedding is unique up to isotopy fixing the marked points. 
	\end{rem}
	
	\begin{figure}
		\centering
		\includestandalone[scale=1.3]{figures/stars}
		\caption{The unmarked star $\ustar_6$ and the marked star $\mstar_6$}
		\label{fig:stars}
	\end{figure}
	
	Equivalent versions of the following two lemmas have appeared previously
	in the literature, although we prove them here for the sake of completeness.
	
	\begin{lem}[\cite{devadoss98}]\label{lem:star-assoc}
		The face poset of the associahedron $K_n$ is isomorphic to 
		$\low(\ustar_{n+1})$.
	\end{lem}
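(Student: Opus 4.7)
The plan is to exhibit an order-preserving bijection $\Phi\colon \tri_{n+1} \to \low(\ustar_{n+1})$ via the classical polygon--tree duality. Given $\tau \in \tri_{n+1}$, its diagonals subdivide $P_{n+1}$ into polygonal regions. Place an unmarked vertex in the interior of each region, place a marked vertex labeled $i$ at $(z_i + z_{i+1})/2$ for each $i$, connect each marked vertex by an edge to the unmarked vertex of the adjacent region, and connect two unmarked vertices by an edge whenever their regions share a diagonal of $\tau$. Let $\Gamma_\tau$ be the resulting embedded graph, equipped with the counter-clockwise cyclic order at each unmarked vertex inherited from the cyclic order of sides of its region. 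A routine check shows $\Gamma_\tau$ is a tree with $n+1$ marked leaves and unmarked vertices of valence at least $3$ (since each region has at least three sides), so it is a planar $(n+1)$-tree; collapsing all unmarked edges of $\Gamma_\tau$ recovers $\ustar_{n+1}$, so $\Gamma_\tau \in \low(\ustar_{n+1})$.

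To build an inverse, I would first establish the structural fact that every $\Gamma \in \low(\ustar_{n+1})$ has each marked vertex as a leaf, and moreover each singly-marked component of $F(\Gamma,\ustar_{n+1})$ consists of a single marked vertex. This follows from a brief valence count: a singly-marked component $T_i$ with $m$ unmarked vertices has $m$ edges, so the sum of internal valences is $2m$; but if $m \geq 1$ then the lower bound of $3$ on unmarked valences in $\Gamma$ together with the requirement of exactly one external edge (since the image of $T_i$ is a leaf of $\ustar_{n+1}$) forces $3m - 1 \leq 2m$, giving $m \leq 1$, and the remaining case $m = 1$ immediately contradicts the valence bound at the unmarked vertex. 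The rest of $\Gamma$ is then a single unmarked subtree $T_0$ with $n+1$ marked leaves attached. Using the canonical embedding of Remark~\ref{rem:star-drawing}, the image of $T_0$ inside $P_{n+1}$ cuts $P_{n+1}$ into polygonal regions whose diagonal boundaries form a partial triangulation $\Psi(\Gamma) \in \tri_{n+1}$. It is immediate from the constructions that $\Phi$ and $\Psi$ are mutually inverse.

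For order preservation, recall that $\tau_1 \leq \tau_2$ in $\tri_{n+1}$ means $\tau_2 \subseteq \tau_1$; the diagonals in $\tau_1 \setminus \tau_2$ correspond under $\Phi$ to edges of $\Gamma_{\tau_1}$ joining pairs of unmarked vertices, and contracting them merges the adjacent regions into the regions of $\tau_2$, yielding a planar contraction $\Gamma_{\tau_1} \to \Gamma_{\tau_2}$. Conversely, by the structural fact, every edge of $\Gamma \in \low(\ustar_{n+1})$ is either a pendant edge at a marked leaf (never in a contracting subforest) or an edge between two unmarked vertices (corresponding to a diagonal of $\Psi(\Gamma)$), so every planar contraction in $\low(\ustar_{n+1})$ arises from deleting diagonals.

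The main obstacle is the planar bookkeeping: one must verify that the counter-clockwise cyclic order at each unmarked vertex of $\Gamma_\tau$, inherited from the cyclic order of sides of the dual region, merges correctly when two regions are joined by diagonal deletion, and that this agrees with the isotopy requirement in Definition~\ref{def:contraction-planar}. This is essentially a picture-level verification, but it is where the argument needs the most care to be made rigorous.
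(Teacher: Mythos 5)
Your proposal is correct and follows essentially the same route as the paper's proof: the dual-tree construction on partial triangulations of $P_{n+1}$, with the inverse coming from the canonical embedding of trees in $\low(\ustar_{n+1})$ into the polygon, and order preservation from matching removal of a diagonal with contraction of the corresponding unmarked edge. Your valence count showing that every singly-marked component of $F(\Gamma,\ustar_{n+1})$ is a single marked vertex (so all marked vertices are leaves and only unmarked edges are ever contracted) makes explicit a fact the paper only asserts via Remark~\ref{rem:star-drawing}, but this is added detail rather than a different argument.
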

	
	\begin{proof}
		Let $\tau$ be a partial triangulation in $\tri_{n+1}$; we
		will construct a corresponding ``dual'' planar tree in 
		$\low(\ustar_{n+1})$.
		For each $k\in \{1,\ldots,n+1\}$, place a marked vertex 
		labeled $k$ at $(z_k + z_{k+1})/2$. For each polygonal region of $P_{n+1}$
		formed by $\tau$, place an unmarked vertex at its barycenter. 
		Add an edge from each unmarked vertex to the marked vertices labeling
		the corners of the corresponding polygonal region and to the
		unmarked vertices labeling adjacent polygonal regions.
		The resulting connected planar graph must be a tree since cutting
		$P_{n+1}$ along any diagonal in $\tau$ splits the polygon
		into two pieces, and this corresponds to deletion of any
		edge disconnecting the graph.
		Moreover, this process is reversible since each planar tree in $\low(\ustar_{n+1})$ can be
		drawn on $P_{n+1}$ by Remark~\ref{rem:star-drawing}, from which
		we may recover the corresponding partial triangulation. 
		Thus, we have defined a bijection $\phi \colon \tri_{n+1} \to \planar_{n+1}$.
		See Figure~\ref{fig:ustar_map} for an example.
		Removing a diagonal in $\tau$ corresponds exactly to contracting an 
		edgein $\phi(\tau)$, so $\phi$ is a poset isomorphism.
	\end{proof}
	
	\begin{figure}
		\centering
		\includestandalone[scale=1.3]{figures/ustar_map}
		\caption{A partial triangulation in $\tri_{12}$ and the 
			associated planar tree in $\low(\ustar_{12})$}
		\label{fig:ustar_map}
	\end{figure}
	
	\begin{lem}[\cite{devadoss03}]\label{lem:star-cyclo}
		The face poset of the cyclohedron $W_n$ is isomorphic to $\low(\mstar_{n+1})$.
	\end{lem}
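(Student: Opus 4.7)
The plan is to mimic the proof of Lemma~\ref{lem:star-assoc} by constructing an explicit ``dual tree'' bijection $\phi\colon \overline{\tri}_n \to \low(\mstar_{n+1})$ and verifying that $\phi$ reverses the two partial orders. The canonical embedding in Remark~\ref{rem:star-drawing}, which places the marked interior vertex $v_{n+1}$ of a tree in $\low(\mstar_{n+1})$ at the origin, is what makes this dual construction possible.

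First I would describe $\phi(\tau)$. Let $\sigma$ denote rotation of $P_{2n}$ by $\pi$, which fixes only the origin. The diagonals of $\tau\in \overline{\tri}_n$ partition $P_{2n}$ into convex regions, and because $\sigma$ has a single fixed point in $P_{2n}$, exactly one of two cases occurs: either $\tau$ contains a diameter of $P_{2n}$, or there is a unique $\sigma$-invariant region of $P_{2n}\setminus\tau$ (the one containing the origin). In either case this ``central data'' becomes the marked interior vertex $v_{n+1}$ placed at the origin. All other regions lie in $\sigma$-pairs, each contributing one unmarked vertex at the barycenter of, say, its upper-half-plane representative. The marked leaves $v_1,\ldots,v_n$ go at the midpoints of one representative from each of the $n$ $\sigma$-orbits of boundary edges of $P_{2n}$, and edges of the tree are drawn dually to the $\sigma$-orbits of diagonals and boundary edges. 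The result is a planar tree in which $v_{n+1}$ is the only interior marked vertex, hence an element of $\low(\mstar_{n+1})$.

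For bijectivity I would give the inverse directly: for $\Gamma \in \low(\mstar_{n+1})$ drawn via Remark~\ref{rem:star-drawing}, the union $\Gamma \cup \sigma(\Gamma)$ is a $\sigma$-invariant planar graph whose complement in $P_{2n}$ is a disjoint union of convex regions and therefore encodes a centrally symmetric partial triangulation. Order-preservation then follows exactly as in Lemma~\ref{lem:star-assoc}: contracting an edge of $\phi(\tau)$ (together with its $\sigma$-image) corresponds to deleting a $\sigma$-orbit of diagonals from $\tau$, so the contraction order on $\low(\mstar_{n+1})$ matches reverse containment on $\overline{\tri}_n$.

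The hardest part will be unifying the two central subcases -- central $\sigma$-invariant region versus central diameter in $\tau$ -- into a single clean description of $\phi$, and verifying that the cyclic ordering of edges incident to $v_{n+1}$ in $\phi(\tau)$ matches the cyclic ordering of $\sigma$-orbits around the origin in $P_{2n}$. The extreme cases serve as useful sanity checks: the empty partial triangulation (the maximum of $\overline{\tri}_n$) has $P_{2n}$ itself as its central $\sigma$-invariant region and maps to $\mstar_{n+1}$, while each centrally symmetric full triangulation maps to a maximally refined planar tree below $\mstar_{n+1}$.
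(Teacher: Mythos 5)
Your construction is essentially the paper's argument: both identify $\low(\mstar_{n+1})$ with $\overline{\tri}_n$ by passing to centrally symmetric data in $P_{2n}$ and quotienting by the $\pi$-rotation, with the fixed center (a $\sigma$-invariant region or a diameter, i.e.\ an unmarked vertex or an edge-midpoint of the dual tree) becoming the marked vertex $v_{n+1}$; the only difference is that the paper routes through the intermediate poset of $\pi$-invariant trees in $\low(\ustar_{2n})$ and cites Lemma~\ref{lem:star-assoc}, whereas you inline that step. One small correction: in the diameter case $v_{n+1}$ is a leaf of $\phi(\tau)$, not an interior vertex, though membership in $\low(\mstar_{n+1})$ still holds since contracting the subtree spanned by $v_{n+1}$ and all unmarked vertices yields $\mstar_{n+1}$.
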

	
	\begin{proof}
		Let $\overline{\low(\ustar_{2n})}$ denote the subposet of trees in
		$\low(\ustar_{2n})$ which are invariant	under rotation by $\pi$
		and note that this subposet is isomorphic to $\overline{\tri}_n$ 
		by Lemma~\ref{lem:star-assoc}. Each $\Gamma \in \overline{\low(\ustar_{2n})}$
		can be canonically represented in the polygon $P_{2n}$, where the center
		point is either an unmarked vertex of $\Gamma$ or the midpoint of 
		an edge in $\Gamma$. Either way, delete the center point, take the 
		quotient of the tree by a $\pi$ rotation, and replace the missing point
		with a marked vertex to obtain a new graph $\Gamma^\prime$.
		The marked vertices labeled $k$ and $n+k$ in $\Gamma$ are
		identified to a single marked vertex labeled $k$ in $\Gamma^\prime$.
		Label the new marked interior vertex by $n+1$ and note that $\Gamma^\prime$ 
		is a planar tree with $n+1$ marked points. Moreover, we know that
		$\Gamma$ contained a subtree which could be contracted to obtain
		$\ustar_{2n}$, so by contracting the corresponding subtree of
		$\Gamma^\prime$ (which now contains the marked point $n+1$),
		we obtain $\mstar_{n+1}$, and thus $\Gamma^\prime \in \low(\mstar_{n+1})$.
		See Figure~\ref{fig:mstar_map} for an example.
		
		Define $\psi\colon \overline{\low(\ustar_{2n})} \to \low(\mstar_{n+1})$
		by $\psi(\Gamma) = \Gamma^\prime$.  The quotient by $\pi$ respects edge-contraction (so $\psi$ is an
		order embedding) and can be undone to obtain a centrally
		symmetric planar tree (so $\psi$ is a bijection). Therefore, 
		$\psi$ is an isomorphism.
	\end{proof}
	
	These lemmas provide the necessary tools to examine the lower set 
	$\low(\Gamma)$ for an arbitrary tree $\Gamma\in \planar_n$.
	
	\begin{defn}\label{def:nbhd}
		Let $\Gamma$ be a planar $n$-tree and let $\Gamma_1$ be a
		subtree of $\Gamma$. Then the (closed) \emph{neighborhood} of
		$\Gamma_1$ in $\Gamma$, denoted $\nbhd(\Gamma_1)$, is the smallest
		subtree containing $\Gamma_1$ and all of its incident
		edges. In particular, if $v$ is a vertex of $\Gamma$ with
		valence $k$, then $\nbhd(v)$ is a star. With the appropriate
		labeling, $\nbhd(v)$ is isomorphic to either
		$\ustar_k$ (if $v$ is unmarked) or $\mstar_k$ (if $v$ is marked).	
	\end{defn}

	\begin{figure}
		\centering
		\includestandalone[scale=1.3]{figures/mstar_map}
		\caption{A centrally symmetric partial triangulation in $\tri_{12}$ and
			its associated planar trees in $\low(\ustar_{12})$ and 
			$\low(\mstar_7)$.}
		\label{fig:mstar_map}
	\end{figure}
	
	\begin{thm}\label{thm:poset-product}
		Let $\Gamma$ be a planar $n$-tree with unmarked interior
		vertices $u_1,\ldots,u_k$ and marked interior vertices
		$v_1,\ldots,v_\ell$. Then 
		\[\low(\Gamma) \cong \prod_{1\leq i \leq k} \low\left(\ustar_{\val(u_i)}\right)
		\times \prod_{1\leq j \leq \ell} \low\left(\mstar_{\val(v_j)}\right)\]
		and therefore $\low(\Gamma)$ is the face poset of a convex
		polytope which can be expressed as a product of associahedra and
		cyclohedra. 
	\end{thm}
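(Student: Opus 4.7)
The plan is to construct an explicit poset isomorphism by ``localizing'' each element of $\low(\Gamma)$ at the interior vertices of $\Gamma$. For each $\Gamma' \in \low(\Gamma)$, let $f \colon \Gamma' \to \Gamma$ be the unique contraction from Lemma~\ref{lem:unique-contraction}. For every interior vertex $w$ of $\Gamma$, set $F_w = f^{-1}(w)$, which is a (possibly trivial) subtree of $\Gamma'$ that is unmarked or singly-marked according to the type of $w$. The $\val(w)$ edges of $\Gamma'$ which $f$ sends onto the edges of $\nbhd(w)$ are each incident to some vertex of $F_w$ on one end; capping each on the far end with an auxiliary labeled leaf produces a planar tree $T_w(\Gamma')$ which contracts to $\nbhd(w)$ by crushing $F_w$ to a point. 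Thus $T_w(\Gamma') \in \low(\nbhd(w))$, and since $\nbhd(w)$ is isomorphic to $\ustar_{\val(w)}$ or $\mstar_{\val(w)}$ (Definition~\ref{def:nbhd}), this yields a map
\[
\Phi \colon \low(\Gamma) \longrightarrow \prod_{i=1}^{k} \low(\ustar_{\val(u_i)}) \times \prod_{j=1}^{\ell} \low(\mstar_{\val(v_j)}).
\]

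Next I would construct the inverse $\Psi$ via gluing. Given a compatible tuple of local trees $(T_w)$, each edge $uw$ of $\Gamma$ corresponds to an auxiliary leaf of $T_u$ labeled $w$ and an auxiliary leaf of $T_w$ labeled $u$; identifying the two incident edges (and deleting the auxiliary leaves) produces a planar tree $\Gamma'$ which contracts to $\Gamma$ via the simultaneous contraction of each $F_w$, so that $\Gamma' \in \low(\Gamma)$. A routine check shows $\Phi \circ \Psi$ and $\Psi \circ \Phi$ are both the identity.

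To see that $\Phi$ is order-preserving in both directions, I would appeal to Corollary~\ref{cor:subforests}(1): for $\Gamma'_1, \Gamma'_2 \in \low(\Gamma)$, the relation $\Gamma'_1 \le \Gamma'_2$ is equivalent to a subforest containment between their associated contraction data over $\Gamma$. Because these subforests decompose disjointly into the local components indexed by the interior vertices of $\Gamma$, the global containment reduces to local containments at each $w$, which are precisely the order relations between $T_w(\Gamma'_1)$ and $T_w(\Gamma'_2)$ in $\low(\nbhd(w))$ (again by Corollary~\ref{cor:subforests}(1)). The polytope conclusion then follows from Lemmas~\ref{lem:star-assoc} and~\ref{lem:star-cyclo}, together with the standard fact that the face poset of a product of convex polytopes is the product of their face posets.

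The main obstacle I anticipate is verifying that the gluing procedure $\Psi$ produces a valid planar $n$-tree. One must check that no unmarked leaf or unmarked valence-$2$ vertex appears in the assembled tree, and that the cyclic orderings of edges around each vertex combine consistently with those of the local trees $T_w$ and of $\Gamma$ at $w$. A secondary point worth care is confirming that $F_w$ is trivial when $w$ is a leaf of $\Gamma$, which is why only interior vertices index the product; this follows from the fact that the single external edge of $F_w$ cannot supply the required valence-$3$ attachment for any unmarked leaf of $F_w$.
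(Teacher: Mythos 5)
Your proposal is correct and follows essentially the same route as the paper: you localize each $\Gamma' \in \low(\Gamma)$ at the interior vertices of $\Gamma$ (your capped trees $T_w(\Gamma')$ are the paper's $\nbhd(\Gamma'_w)$ with the identification to $\ustar_{\val(w)}$ or $\mstar_{\val(w)}$ made explicit), invert by gluing/replacement, and verify the order embedding via Corollary~\ref{cor:subforests}(1). The only difference is that you flag and justify some well-definedness checks (validity of the glued tree, triviality of $F_w$ at leaves) that the paper's proof leaves implicit; these checks are correct.
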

	
	\begin{proof}
		Let $\Gamma$ be a planar $n$-tree, let $u$ be an 
		unmarked interior vertex of $\Gamma$, and fix an identification between
		$\nbhd(u)$ and $\ustar_{\val(u)}$ as described in Definition~\ref{def:nbhd}.
		For each $\Gamma^\prime \in \low(\Gamma)$, we know that some subtree 
		$\Gamma^\prime_u$ of $\Gamma^\prime$ contracts to $u$ and so 
		$\nbhd(\Gamma^\prime_u)$ contracts to $\nbhd(u)$. Abusing notation
		slightly, we denote this by writing $\nbhd(\Gamma^\prime_u) \in \low(\ustar_{\val(u)})$.
		Similarly, we can see that each interior marked
		vertex $v$ of $\Gamma$ has a corresponding subtree $\Gamma^\prime_v$
		of $\Gamma^\prime$ such that $\nbhd(\Gamma^\prime_v) \in \low(\mstar_{\val(v)})$.
		
		Now, let $u_1,\ldots,u_k$ and $v_1,\ldots,v_\ell$ be the
		unmarked and marked interior vertices of $\Gamma$, respectively.
		Identify each $\nbhd(u_i)$ with $\ustar_{\val(u_i)}$ and
		each $\nbhd(v_i)$ with $\mstar_{\val(v_j)}$ as before and
		define the function 
		\[
		\psi\colon \low(\Gamma) \to 
		\prod_{1\leq i \leq k} \low\left(\ustar_{\val(u_i)}\right)
		\times \prod_{1\leq j \leq \ell} \low\left(\mstar_{\val(v_j)}\right)
		\]
		by declaring $\psi(\Gamma^\prime) = (\nbhd(\Gamma^\prime_{u_1}),\ldots,\nbhd(\Gamma^\prime_{u_k}),
		\nbhd(\Gamma^\prime_{v_1}),\ldots,\nbhd(\Gamma^\prime_{v_\ell}))$. We
		will show that $\psi$ is a poset isomorphism.

		Given any $u_i$ and an element $\Lambda_i \in \low(\ustar_{\val(u_i)})$,
		we can use the identification fixed previously to replace $\nbhd(u_i)$ with
		$\Lambda_i$, and the resulting planar $n$-tree will be an element of 
		$\low(\Gamma)$ since, by construction, it can be contracted to $\Gamma$. 
		Similarly, we can replace $\nbhd(v_j)$ with any element of
		$\low(\mstar_{\val(v_j)})$ to obtain an element of $\low(\Gamma)$.
		Moreover, these replacements do not interfere with one another, so
		any $(k+\ell)$-tuple in the codomain of $\psi$ determines a unique
		$n$-tree $\Gamma^\prime$ in $\low(\Gamma)$. Finally, note that that
		$\psi(\Gamma^\prime)$ returns our original $(k+\ell)$-tuple by
		construction and therefore $\psi$ is a bijection.
		
		Next, let $\Gamma^\prime$ and $\Gamma^{\pprime}$ be elements of 
		$\low(\Gamma)$. We know by the first part of Corollary~\ref{cor:subforests}
		that $\Gamma^{\prime} \leq \Gamma^{\pprime}$
		if and only if $F(\Gamma^{\pprime},\Gamma)$ is a subforest of 
		$F(\Gamma^\prime,\Gamma)$. This is 
		equivalent to saying that $\Gamma^\pprime_{u_i} \leq \Gamma^\prime_{u_i}$
		and $\Gamma^\pprime_{v_j} \leq \Gamma^\prime_{v_j}$ for each $i$ and $j$,
		which is then equivalent to saying that 
		$\nbhd(\Gamma^\pprime_{u_i}) \leq \nbhd(\Gamma^\prime_{u_i})$ in $\ustar_{\val(u_i)}$
		and $\nbhd(\Gamma^\pprime_{v_j}) \leq \nbhd(\Gamma^\prime_{v_j})$ in $\mstar_{\val(v_j)}$
		for each $i$ and $j$.
		We have thus shown that $\Gamma^\prime \leq \Gamma^\pprime$ if
		and only if $\psi(\Gamma^\prime) \leq \psi(\Gamma^\pprime)$, so
		$\psi$ is an order embedding and therefore an isomorphism.
	\end{proof}
	
	\begin{example}\label{ex:convex-polytope}
		If $\Gamma$ is the planar $6$-tree depicted in Figure~\ref{fig:n-tree},
		then the lower set $\low(\Gamma)$ is isomorphic to the direct product
		$\low(\ustar_4) \times \low(\mstar_3)$, which is isomorphic to
		the face poset for the product of $K_3\times W_2$, i.e. a hexagonal prism.
	\end{example}
	
	\begin{rem}\label{rem:planar-complex}
		As Theorem~\ref{thm:poset-product} suggests, $\planar_n$ is the
		face poset of a regular cell complex in which each cell is a convex
		polytope. As depicted in Figure~\ref{fig:p3_poset}, $\planar_3$
		is the face poset of a $1$-dimensional complex with 
		3 edges and 2 vertices (i.e. a theta graph). 
		Meanwhile, $\planar_4$ is the
		face poset of a $2$-dimensional complex consisting of 20 $2$-cells 
		(eight hexagons and twelve squares), 30 edges, and 12 vertices.
	\end{rem}
	
	\section{Tree Complexes}
	\label{sec:tree-complexes}
	
	In this section, we consider planar tree embeddings up to 
	relative isotopy fixing the marked vertices pointwise.
	This infinite set of trees admits a similar partial order 
	by contraction and forms the face poset for a contractible
	cell complex.
	
	\begin{defn}
		Fix a set of points $P = \{z_1,\ldots,z_n\}$ in $\mathbb{C}$,
		where $n\geq 3$. A \emph{planted $n$-tree}
		is a relative isotopy class of embeddings $\phi\colon \Gamma\to\mathbb{C}$
		where $\Gamma$ is an $n$-tree such that each marked point $v_i$ in $\Gamma$
		is sent to $z_i$ in $P$. Following Definition~\ref{def:contraction-planar},
		we say that if $\phi_1\colon\Gamma_1\to\mathbb{C}$ and 
		$\phi_2\colon\Gamma_2\to\mathbb{C}$ are planted $n$-trees and 
		$f\colon \Gamma_1\to\Gamma_2$ is a contraction such that 
		$\phi_1$ and $\phi_2\circ f$ are isotopic relative to the
		marked points, then $f$ is a \emph{planted contraction}.
		If such a contraction exists, we write $\Gamma_1\leq\Gamma_2$
		and observe that this determines a partial order on the
		set of all planted $n$-trees. We refer to this partially ordered 
		set as the \emph{planted tree poset} and denote it $\planted_n$.
	\end{defn}
	
	It is worth noting that the combinatorial structure of the planted
	tree poset does not depend on our choice of $P$. More explicitly,
	if $\planted_n(P)$ and $\planted_n(P^\prime)$ are the posets determined
	by two sets $P$ and $P^\prime$ of $n$ points in $\mathbb{C}$, then
	any homeomorphism $\mathbb{C}\to\mathbb{C}$ which takes $P$ to $P^\prime$
	induces an isomorphism $\planted_n(P) \to \planted_n(P^\prime)$.
	
	\begin{figure}
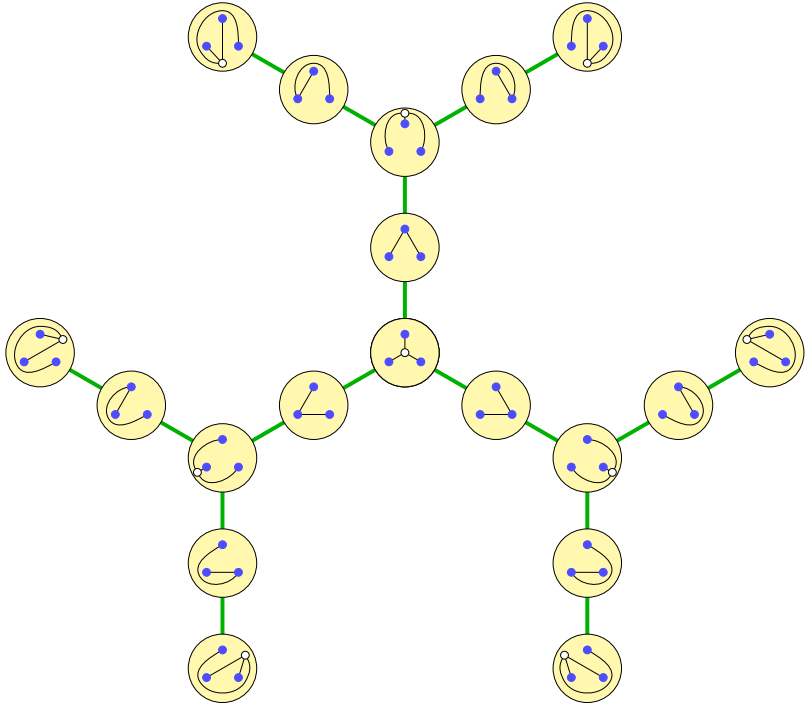

		\centering
		\includestandalone[scale=0.35]{figures/trivalent_tree}
		\caption{A piece of the simplicial tree complex $\mathcal{S}_3$}
		\label{fig:simplicial-tree-complex}
	\end{figure}
	
	\begin{defn}
		The \emph{simplicial tree complex} $\mathcal{S}_n$ is the order complex of the
		planted tree poset $\planted_n$. For example, $\mathcal{S}_3$ is isomorphic to the 
		infinite bipartite tree $T_{2,3}$, depicted in Figure~\ref{fig:simplicial-tree-complex}.
	\end{defn}
	
	This complex appeared in recent work
	of Belk, Lanier, Margalit, and Winarski on complex dynamics \cite{belk}.
	In this setting, each postcritically finite complex polynomial 
	(i.e. one in which the critical points have finite forward orbits) 
	has an associated \emph{Hubbard tree}. The simplicial tree complex 
	then acts as a useful tool for studying transformations
	of polynomials via their Hubbard trees. Using a result of Penner 
	\cite{penner96}, the authors in \cite{belk} describe an 
	embedding of the simplicial tree complex as a spine for the 
	Teichm\"uller space of the $(n+1)$-punctured sphere.
	As a consequence, they conclude that the simplicial tree complex is contractible.
	The focus of this article is to examine the combinatorial structure of the simplicial 
	tree complex and introduce a simpler polyhedral cell structure.
	
	Any self-homeomorphism of the plane which fixes $P$ pointwise sends
	each planted $n$-tree to another planted $n$-tree, and the only homeomorphisms
	which send a tree to itself are those which are isotopic to the identity. 
	Moreover, if $\phi_1\colon\Gamma_1 \to \mathbb{C}$
	and $\phi_2\colon\Gamma_2 \to \mathbb{C}$ are planted $n$-trees, then
	$\phi_1$ and $\phi_2$ are freely isotopic if and only if there is a homeomorphism
	$g\colon \mathbb{C}\to \mathbb{C}$ such that $\phi_1$ and $g\circ \phi_2$ 
	are isotopic relative to the marked points.
	In other words, if we let $\mathbb{C}_P$ denote the $n$-punctured plane 
	$\mathbb{C}-P$, then the pure mapping class group $\PMod(\mathbb{C}_P)$
	acts freely on $\planted_n$, and the orbits under this action correspond exactly
	to planar $n$-trees. This action is equivariant with respect to the
	partial order on the planted tree poset, which is to say that for any
	$g\in \PMod(\mathbb{C}_P)$, 	$\Gamma_1 \leq \Gamma_2$
	in $\planted_n$ if and only if $g\Gamma_1 \leq g\Gamma_2$.
	
	\begin{defn}\label{def:quotient-map}		
		Define the order-preserving surjective map 
		$p\colon \planted_n \to \planar_n$ by sending each planted $n$-tree 
		$\phi\colon \Gamma\to \mathbb{C}$ to its free isotopy class in $\planar_n$,
		and observe that the preimages under this map are the orbits of the action by
		$\PMod(\mathbb{C}_P)$. In particular, the partial order on the planar tree poset
		matches the partial order on the orbits: $p(\Gamma_1) \leq p(\Gamma_2)$
		in $\planar_n$ if and only if $g\Gamma_1 \leq \Gamma_2$ in $\planted_n$
		for some $g\in \PMod(\mathbb{C}_P)$. 
	\end{defn}

	Our first step is to strengthen an observation from the definition above.

	\begin{lem}\label{lem:tree-lifting}
		Let $\Gamma_1$ and $\Gamma_2$ be planted $n$-trees. Then 
		$p(\Gamma_1) \leq p(\Gamma_2)$ in $\planar_n$ if and only if 
		there is a unique $g\in \PMod(\mathbb{C}_P)$ such that 
		$g\Gamma_1 \leq \Gamma_2$ in $\planted_n$.
	\end{lem}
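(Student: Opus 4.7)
The existence half of the biconditional is already packaged into Definition~\ref{def:quotient-map}, so the real content of the lemma is uniqueness. Accordingly, my proposed proof focuses entirely on uniqueness and reduces it, via the free action of $\PMod(\mathbb{C}_P)$, to a statement about the combinatorial rigidity of the lower set of $\Gamma_2$ in the planted poset.

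Concretely, I would take two elements $g,h\in\PMod(\mathbb{C}_P)$ with $g\Gamma_1\le\Gamma_2$ and $h\Gamma_1\le\Gamma_2$ in $\planted_n$ and set $\Gamma'=g\Gamma_1$ and $k=hg^{-1}$. Then both $\Gamma'$ and $k\Gamma'$ lie in $\low(\Gamma_2)\subseteq\planted_n$, and both map to the single planar tree $p(\Gamma_1)$ under $p$. Because the $\PMod(\mathbb{C}_P)$-action on $\planted_n$ is free (as observed in the paragraph following Definition~\ref{def:quotient-map}), it would be enough to show $k\Gamma'=\Gamma'$, for then $k=1$ and $g=h$. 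Thus the whole proof is pinned to the claim: if $\Gamma',\Gamma''\in\low(\Gamma_2)$ have equal planar types $p(\Gamma')=p(\Gamma'')$, then $\Gamma'=\Gamma''$ as planted trees.

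To prove this claim, I plan to argue locally around each vertex of $\Gamma_2$. By Lemma~\ref{lem:unique-contraction}, the contractions $f'\colon\Gamma'\to\Gamma_2$ and $f''\colon\Gamma''\to\Gamma_2$ are the unique contractions of the underlying $n$-trees, and in particular the preimage subtree $(f')^{-1}(v)$ has the same combinatorial structure as $(f'')^{-1}(v)$ for every vertex $v$ of $\Gamma_2$; since $p(\Gamma')=p(\Gamma'')$, these preimage subtrees even have the same planar type. I would choose pairwise disjoint open disks $D_v$ centered at $\phi_{\Gamma_2}(v)$, small enough that $D_v$ meets $P$ only at $\phi_{\Gamma_2}(v)$ itself when $v$ is marked (and not at all when $v$ is unmarked), and then isotope both embeddings rel $P$ so that each preimage subtree lies inside the corresponding $D_v$ and so that the two embeddings agree outside $\bigcup_v D_v$ (matching a prescribed tubular neighborhood of $\phi_{\Gamma_2}(\Gamma_2)$). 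The desired global isotopy rel $P$ is then assembled vertex by vertex from the local isotopies inside each $D_v$, extended by the identity outside.

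The main obstacle is proving exactly that local isotopy: two embeddings of a planar tree into a disk $D_v$, having identical positions on the boundary $\partial D_v$ and (for marked $v$) at the central marked vertex, are isotopic rel those prescribed points. For an unmarked vertex this is essentially the triviality of the mapping class group of a disk rel boundary, and follows from the standard fact that a tree in a disk cuts it into polygonal regions whose adjacency pattern is determined by the planar structure. For a marked vertex one has to rule out Dehn-twist ambiguity around that puncture; I would handle this by observing that the cyclic order of edges at the marked center together with the fixed positions of the boundary attachment points already pin down the free homotopy classes of the edges emanating from the marked vertex, leaving no room for a nontrivial twist. Once this local rigidity is in hand, assembling the local isotopies gives $\Gamma'=\Gamma''$ and the lemma follows.
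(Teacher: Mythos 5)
Your reduction to the claim ``if $\Gamma',\Gamma''\in\low(\Gamma_2)$ have $p(\Gamma')=p(\Gamma'')$ then $\Gamma'=\Gamma''$'' is correct, and the claim is true, but the route you take to prove it is much heavier than what the paper does, and at one point you essentially re-derive by hand a fact that is already packaged in the definitions. The paper's argument is a two-line definition chase: by Lemma~\ref{lem:unique-contraction} the contractions $f'\colon\Gamma'\to\Gamma_2$ and $f''\colon\Gamma''\to\Gamma_2$ coincide as a single map $f$, and the definition of $\leq$ in $\planted_n$ says that $\phi_{\Gamma'}$ and $\phi_{\Gamma''}$ are each isotopic rel the marked points to the \emph{same} map $\phi_{\Gamma_2}\circ f$; hence they are rel-isotopic to each other, $\Gamma'=\Gamma''$ as planted trees, and freeness of the $\PMod(\mathbb{C}_P)$-action finishes the proof. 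You do invoke the same two ingredients (uniqueness of the contraction and freeness of the action), but instead of exploiting the rel-isotopy to $\phi_{\Gamma_2}\circ f$ directly, you build an explicit isotopy via a local-to-global analysis in disks around the vertices of $\Gamma_2$. Note that your normalization step --- isotoping both embeddings rel $P$ so that the preimage subtrees sit in the disks $D_v$ and the embeddings agree outside $\bigcup_v D_v$ --- is itself justified precisely by the rel-isotopies to $\phi_{\Gamma_2}\circ f'$ and $\phi_{\Gamma_2}\circ f''$ together with $f'=f''$, and once you have that, the conclusion already follows with no local analysis at all. The local rigidity statements you then need (uniqueness up to isotopy of an embedded planar tree in a disk with prescribed boundary data, and triviality of the mapping class group of a once-marked disk rel boundary, which in particular means there is no Dehn-twist ambiguity to rule out) are standard and could be completed, but they add real technical overhead that the paper's proof avoids entirely.
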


	\begin{proof}
		Let $\Gamma_1$ and $\Gamma_2$ be planted $n$-trees with
		planar embeddings $\phi_1\colon\Gamma_1 \to \mathbb{C}$ and
		$\phi_2\colon\Gamma_2 \to \mathbb{C}$ respectively.
		We can see from Definition~\ref{def:quotient-map} that 
		$p(\Gamma_1) \leq p(\Gamma_2)$ in $\planar_n$ if and only if 
		$g\Gamma_1 \leq \Gamma_2$ in $\planted_n$ for \emph{some}
		$g$ in the pure mapping class group $\PMod(\mathbb{C}_P)$;
		all that remains is to show that $g$ must be unique. 
		Suppose $p(\Gamma_1) \leq p(\Gamma_2)$ and that
		$g \Gamma_1 \leq \Gamma_2$ and $h \Gamma_1 \leq \Gamma_2$
		for some $g,h\in \PMod(\mathbb{C}_P)$. By Lemma~\ref{lem:unique-contraction},
		both of these inequalities must be realized by the same contraction
		$f\colon \Gamma_1\to\Gamma_2$. We then know that $g\circ \phi_1$
		is relatively isotopic to $\phi_2\circ f$ since $g \Gamma_1 \leq \Gamma_2$
		and $h\circ \phi_1$
		is relatively isotopic to $\phi_2\circ f$ since $h \Gamma_1 \leq \Gamma_2$.
		Therefore, $g\circ \phi_1$ is isotopic to $h\circ \phi_2$ relative to
		the marked points, and since $\PMod(\mathbb{C}_P)$ acts freely on
		the planted tree poset, we may conclude that $g=h$.
	\end{proof}

	The preceding lemma implies that $p$ restricts to injective maps on
	the lower set $\low(\Gamma)$ and the upper set $\upper(\Gamma)$ 
	(recall Definition~\ref{def:upper-lower}).

	\begin{lem}\label{lem:planted-planar-iso}
		Let $\Gamma$ be a planted $n$-tree. Then $\low(\Gamma)$ is isomorphic to 
		$\low(p(\Gamma))$ and $\upper(\Gamma)$ is isomorphic to $\upper(p(\Gamma))$.	
	\end{lem}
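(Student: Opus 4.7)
The plan is to show that $p$ restricts to a poset isomorphism on each of $\low(\Gamma)$ and $\upper(\Gamma)$, with Lemma~\ref{lem:tree-lifting} doing essentially all the work. The map $p$ is order-preserving by Definition~\ref{def:quotient-map}, so in both cases it suffices to establish that the restriction is a bijection and that it reflects the partial order. I would handle the lower-set case first and then note that the upper-set case follows by a symmetric argument using equivariance of $\leq$ under $\PMod(\mathbb{C}_P)$.

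For the lower set, fix the restriction $p\colon \low(\Gamma)\to\low(p(\Gamma))$. For injectivity, suppose $\Gamma_1',\Gamma_2'\in\low(\Gamma)$ satisfy $p(\Gamma_1')=p(\Gamma_2')$. Then $\Gamma_2' = h\Gamma_1'$ for some $h\in\PMod(\mathbb{C}_P)$. Since $\Gamma_1'\leq\Gamma$ and $h\Gamma_1'=\Gamma_2'\leq\Gamma$, Lemma~\ref{lem:tree-lifting} (applied to $\Gamma_1'$ and $\Gamma$) forces $h$ to coincide with the identity, so $\Gamma_1'=\Gamma_2'$. For surjectivity, take $\Delta\in\low(p(\Gamma))$ and any planted representative $\widetilde{\Delta}$ of $\Delta$. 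Then $p(\widetilde{\Delta})=\Delta\leq p(\Gamma)$, so Lemma~\ref{lem:tree-lifting} yields a unique $g\in\PMod(\mathbb{C}_P)$ with $g\widetilde{\Delta}\leq\Gamma$; the tree $g\widetilde{\Delta}$ lies in $\low(\Gamma)$ and satisfies $p(g\widetilde{\Delta})=\Delta$.

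For order reflection, suppose $\Gamma_1',\Gamma_2'\in\low(\Gamma)$ with $p(\Gamma_1')\leq p(\Gamma_2')$. By Lemma~\ref{lem:tree-lifting} there is a unique $g\in\PMod(\mathbb{C}_P)$ with $g\Gamma_1'\leq\Gamma_2'$. Chaining with $\Gamma_2'\leq\Gamma$ gives $g\Gamma_1'\leq\Gamma$, while $\Gamma_1'\leq\Gamma$ holds by hypothesis; applying the uniqueness clause of Lemma~\ref{lem:tree-lifting} to $\Gamma_1'$ and $\Gamma$ forces $g$ to be the identity, so $\Gamma_1'\leq\Gamma_2'$.

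The upper-set case is handled by the analogous argument, with the roles of domain and codomain of the contraction swapped and equivariance of $\leq$ used to move the mapping class group element past $\Gamma$. For surjectivity, given $\Delta\in\upper(p(\Gamma))$ and a representative $\widetilde{\Delta}$, Lemma~\ref{lem:tree-lifting} supplies a unique $g$ with $g\Gamma\leq\widetilde{\Delta}$, and equivariance yields $\Gamma\leq g^{-1}\widetilde{\Delta}\in\upper(\Gamma)$. The main obstacle is not conceptual but bookkeeping: at each step one must apply Lemma~\ref{lem:tree-lifting} to the right pair of trees and be careful to invoke the uniqueness clause rather than just existence, since that is what rules out nontrivial deck transformations among representatives within a single orbit.
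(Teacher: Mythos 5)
Your proposal is correct and follows the same route as the paper: both rest entirely on Lemma~\ref{lem:tree-lifting}, using the uniqueness of the lifting element $g$ to get injectivity and order reflection on $\low(\Gamma)$, and the observation that $g\Gamma_1\leq\Gamma_2$ if and only if $\Gamma_1\leq g^{-1}\Gamma_2$ to transfer the result to $\upper(\Gamma)$. Your write-up simply makes explicit the bijectivity and order-embedding checks that the paper's proof leaves implicit.
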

	
	\begin{proof}
		Let $\Gamma$ be a planted $n$-tree. By Lemma~\ref{lem:tree-lifting}, we know
		that every element of $\low(p(\Gamma))$ has a unique preimage under $p$
		which lies in $\low(\Gamma)$. In particular, this lemma tells us that 
		restricting $p$ to a map $\low(\Gamma)\to \low(p(\Gamma))$ yields a
		surjective order embedding, i.e. a poset isomorphism. By noting that
		$g\Gamma_1 \leq \Gamma_2$ if and only if $\Gamma_1 \leq g^{-1}\Gamma_2$,
		we obtain the analogous result for $\upper(\Gamma)$ as well.
	\end{proof}

	Now that we have established the connection between planted and planar trees,
	we are ready to introduce a simplified cell structure for the simplicial 
	tree complex.

	\begin{thm}[Theorem~\ref{thma:polyhedral-tree-complex}]\label{thm:polyhedral-tree-complex}
		The planted tree poset $\planted_n$ is the face poset of a regular
		CW-complex $\mathcal{P}_n$ which we call the \emph{polyhedral tree complex}, 
		in which each planted $n$-tree $\Gamma$ labels a cell which is isomorphic to
		the convex polytope labeled by the planar $n$-tree $p(\Gamma)$,
		and gluing relations are given by the partial order in $\planted_n$.
		In particular, the top-dimensional cells are products of cyclohedra.
		Consequently, the simplicial tree complex is the barycentric subdivision
		of the polyhedral tree complex.
	\end{thm}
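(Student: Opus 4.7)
The plan is to build the cell complex $\mathcal{P}_n$ one cell at a time, using Lemma~\ref{lem:planted-planar-iso} to transfer Theorem~\ref{thm:poset-product} from the planar to the planted setting. For each planted $n$-tree $\Gamma$, Lemma~\ref{lem:planted-planar-iso} gives a poset isomorphism $\low(\Gamma) \cong \low(p(\Gamma))$, and Theorem~\ref{thm:poset-product} realizes the latter as the face poset of a convex polytope $C_\Gamma$ which is a product of associahedra and cyclohedra. I would declare $C_\Gamma$ to be the cell labeled by $\Gamma$ and assemble $\mathcal{P}_n$ as the quotient of $\bigsqcup_{\Gamma} C_\Gamma$ by the identifications dictated by $\planted_n$: whenever $\Gamma^\prime \leq \Gamma$, the polytope $C_{\Gamma^\prime}$ is glued to the face of $C_\Gamma$ indexed by $\Gamma^\prime$ under the face poset isomorphism.

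The first task is to check that these gluings are consistent along chains $\Gamma^{\pprime} \leq \Gamma^\prime \leq \Gamma$ in $\planted_n$, i.e. that the composite identification of $C_{\Gamma^{\pprime}}$ inside $C_\Gamma$ through $C_{\Gamma^\prime}$ agrees with the direct identification. This is where Corollary~\ref{cor:subforests} does the work: the inclusions of lower sets are governed by inclusions of subforests, and the product decomposition in the proof of Theorem~\ref{thm:poset-product} is local at each interior vertex, so the face assignments at each stage are recovered from the same neighborhood data in $\Gamma$. Once consistency is established, the resulting space $\mathcal{P}_n$ is a regular CW-complex whose characteristic maps are the inclusions of the convex polytopes $C_\Gamma$, and by construction its face poset is $\planted_n$.

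To identify the top-dimensional cells, I would argue that the maximal elements of $\planted_n$ are exactly the planted trees with no unmarked vertices. Indeed, any unmarked interior vertex together with an adjacent edge forms a contractible singly-marked subforest, producing a strictly larger element of $\planted_n$; conversely, a tree whose vertices are all marked admits no contraction since every edge is doubly-marked. For such a maximal $\Gamma$ the product in Theorem~\ref{thm:poset-product} contains only factors $\low(\mstar_{\val(v_j)})$, which are face posets of cyclohedra by Lemma~\ref{lem:star-cyclo}. Finally, the barycentric subdivision statement is immediate from the general principle recorded in Section~1: for a polytopal cell complex $X$, one has $\Delta(P(X)) \cong \operatorname{sd}(X)$, and by definition $\mathcal{S}_n = \Delta(\planted_n) = \Delta(P(\mathcal{P}_n))$.

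The main obstacle I expect is the bookkeeping in the consistency check of the previous paragraph, namely verifying that the isomorphisms $\low(\Gamma) \cong \low(p(\Gamma))$ from Lemma~\ref{lem:planted-planar-iso} are natural with respect to contractions and that the product decomposition from Theorem~\ref{thm:poset-product} restricts correctly when passing to $\low(\Gamma^\prime) \subset \low(\Gamma)$. Everything else reduces to invoking existing results, but this naturality step is what actually guarantees that the attaching maps assemble into a well-defined regular CW-structure rather than merely a formal recipe.
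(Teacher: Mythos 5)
Your proposal is correct and follows essentially the same route as the paper, which simply cites Lemma~\ref{lem:planted-planar-iso}, Theorem~\ref{thm:poset-product}, and the general fact relating order complexes to face posets of polytopal complexes. Your version spells out the details the paper leaves implicit (the consistency of the gluings and the characterization of maximal elements as trees with no unmarked vertices), and those details check out.
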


	\begin{proof}
		The first claim follows from the definitions and Lemma~\ref{lem:planted-planar-iso}.
		The fact that top-dimensional cells can be expressed as products of cyclohedra
		is a consequence of Theorem~\ref{thm:poset-product}. 
		The final claim follows from the relationship between
		order complexes and their face posets.
	\end{proof}
	
	Since the simplicial tree complex is contractible \cite{belk}, we may
	immediately conclude the following corollary.
	
	\begin{cor}
		The polyhedral tree complex is contractible and admits a free and 
		cocompact action by the pure mapping class group for the 
		$n$-punctured plane. The quotient by this action, a finite cell
		complex with $\planar_n$ as its face poset, is thus a 
		classifying space for $\PMod(\mathbb{C}_P)$. 
	\end{cor}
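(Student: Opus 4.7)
The corollary has three parts: contractibility of $\mathcal{P}_n$, freeness and cocompactness of the $\PMod(\mathbb{C}_P)$-action, and the classifying space conclusion together with the identification of the quotient's face poset as $\planar_n$. Contractibility is immediate: by Theorem~\ref{thm:polyhedral-tree-complex}, $\mathcal{S}_n$ is the barycentric subdivision of $\mathcal{P}_n$, so the two have homeomorphic underlying spaces, and \cite{belk} supplies contractibility of $\mathcal{S}_n$.

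For the action, I would begin by noting that $\PMod(\mathbb{C}_P)$ acts on $\planted_n$ preserving the partial order (Section~\ref{sec:tree-complexes}), so the action extends to a cellular action on $\mathcal{P}_n$. Freeness of the action on $\planted_n$ itself is already recorded; to deduce freeness on the underlying space, I would pass to $\mathcal{S}_n$, where each simplex corresponds to a chain in $\planted_n$. Any stabilizer of such a simplex must fix the chain as an ordered sequence, since the action is order-preserving, and hence fixes each vertex individually and is trivial by freeness on $\planted_n$. Cocompactness reduces to finiteness of $\planar_n$: by Section~2, an $n$-tree has at most $2n-2$ vertices, so there are finitely many underlying $n$-trees and only finitely many cyclic orderings at each vertex, and the orbits on cells of $\mathcal{P}_n$ are in bijection with $\planar_n$ via $p$.

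Finally, a free cellular action of a discrete group $G$ on a contractible CW-complex makes the quotient map a universal covering, so the quotient is a $K(G,1)$. The orbit poset of $\planted_n$ under $\PMod(\mathbb{C}_P)$ is $\planar_n$ by Definition~\ref{def:quotient-map}, and this is the face poset of the quotient complex. The main subtlety I anticipate is the freeness argument: a simplicial automorphism of a simplex that nontrivially permutes its vertices generically has interior fixed points, so order-preservation is essential to rule out any nontrivial vertex permutation in the first place. Once this point is handled, the remaining pieces assemble routinely from the machinery already developed in Sections~\ref{sec:assoc-cyclo} and~\ref{sec:tree-complexes}.
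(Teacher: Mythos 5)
Your proposal is correct and follows the same route the paper intends: the paper states this corollary without proof, treating it as an immediate consequence of the contractibility of $\mathcal{S}_n$ from \cite{belk} together with the free, order-preserving action of $\PMod(\mathbb{C}_P)$ on $\planted_n$ and the finiteness of $\planar_n$. Your explicit handling of freeness on the underlying space (a stabilizer of a simplex of $\mathcal{S}_n$ must fix the corresponding chain pointwise because the action is order-preserving) is exactly the one detail worth spelling out, and you have done so correctly.
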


	\begin{figure}
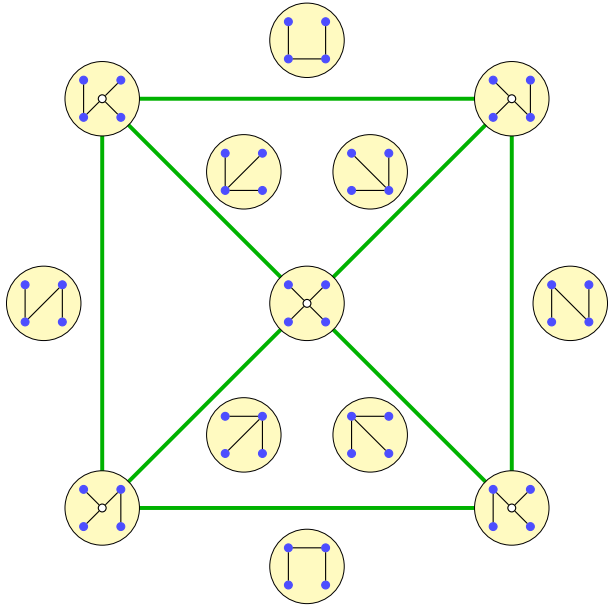

		\centering
		\includestandalone[scale=0.35]{figures/vertex_link}
		\caption{The link of a vertex in $\mathcal{P}_4$}
		\label{fig:poly-vertex-link}
	\end{figure}

	\begin{example}
		To demonstrate the relative simplicity of the polyhedral cell
		structure, let $\Gamma$ be a planted $6$-tree such that $p(\Gamma)$
		is the planar $6$-tree shown in Figure~\ref{fig:n-tree}. Then
		$\Gamma$ labels a hexagonal prism in the polyhedral tree complex, and the
		barycentric subdivision of this prism (i.e. the corresponding subcomplex
		in the simplicial tree complex) consists of 72 tetrahedra which meet
		at a common vertex.
	\end{example}
	
	We close the section with two illustrative examples.
	
	\begin{example}
		The simplicial tree complex $\mathcal{S}_3$ is the bipartite tree
		$T_{2,3}$, whereas the polyhedral tree complex $\mathcal{P}_3$ is 
		the regular trivalent tree $T_3$ - see Figure~\ref{fig:simplicial-tree-complex}. 
		The quotient of $\mathcal{P}_3$ by the $\PMod(\mathbb{C}_P)$ action
		is the theta graph described in Remark~\ref{rem:planar-complex}.
	\end{example}
	
	\begin{example}
		The polyhedral tree complex $\mathcal{P}_4$ is a two-dimensional cell
		complex which can be thought of as the universal cover of the 
		complex with $\planar_4$ as its face poset, described in 
		Remark~\ref{rem:planar-complex}. The link of a vertex in $\mathcal{P}_4$
		is a graph with five vertices and eight edges - see Figure~\ref{fig:poly-vertex-link}.
	\end{example}
	
	\section{Marked and Unmarked Edges}
	
	The final section of this article concerns an equivalence relation on 
	the set of planted $n$-trees where elements are considered up to 
	contraction of unmarked edges.
		
	\begin{defn}
		Let $\Gamma_1 \leq \Gamma_2$ in $\planted_n$. If each component 
		in the subforest $F(\Gamma_1,\Gamma_2)$ is singly-marked, we 
		write $\Gamma_1 \leq_m \Gamma_2$. If 
		each component in the subforest is unmarked, then
		we write $\Gamma_1 \leq_u \Gamma_2$.
	\end{defn}

	\begin{lem}\label{lem:unmarked-factor}
		Suppose that $\Gamma_1 \leq \Gamma_2$ in $\planted_n$. Then there
		is a unique $\Gamma_3 \in \planted_n$ such that $\Gamma_1 \leq_u \Gamma_3$
		and $\Gamma_3 \leq_m \Gamma_2$.
	\end{lem}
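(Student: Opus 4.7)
The plan is to construct $\Gamma_3$ explicitly by contracting only the ``unmarked portion'' of the subforest $F = F(\Gamma_1,\Gamma_2)$ in $\Gamma_1$, and then to establish uniqueness using Lemma~\ref{lem:unique-contraction} together with Corollary~\ref{cor:subforests}. For existence, let $F_u$ denote the subforest of $\Gamma_1$ consisting of those edges of $F$ both of whose endpoints are unmarked (together with these endpoints). Since each component of $F$ contains at most one marked vertex, the components of $F_u$ are all unmarked subtrees of $\Gamma_1$. Let $\Gamma_3\in\planted_n$ be obtained from $\Gamma_1$ by contracting $F_u$, so that $F(\Gamma_1,\Gamma_3)=F_u$ and $\Gamma_1\leq_u\Gamma_3$ holds immediately.

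To verify $\Gamma_3\leq_m\Gamma_2$, observe that the composite $\Gamma_1\to\Gamma_3\to\Gamma_2$ is a contraction from $\Gamma_1$ to $\Gamma_2$, and by Lemma~\ref{lem:unique-contraction} it must agree with the unique contraction realizing $\Gamma_1\leq\Gamma_2$. Consequently, $F(\Gamma_3,\Gamma_2)$ is precisely the image under $\Gamma_1\to\Gamma_3$ of $F\smallsetminus F_u$. Each edge in $F\smallsetminus F_u$ carries a marked endpoint in $\Gamma_1$ which survives unchanged in $\Gamma_3$, so every component of $F(\Gamma_3,\Gamma_2)$ contains a (necessarily unique) marked vertex, giving $\Gamma_3\leq_m\Gamma_2$.

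For uniqueness, suppose $\Gamma_3^{\prime}$ also satisfies $\Gamma_1\leq_u\Gamma_3^{\prime}\leq_m\Gamma_2$. Applying Corollary~\ref{cor:subforests}(2) to the contractions $\Gamma_1\to\Gamma_3^{\prime}$ and $\Gamma_1\to\Gamma_2$ yields $F(\Gamma_1,\Gamma_3^{\prime})\subseteq F$ as subgraphs of $\Gamma_1$. Since each component of $F(\Gamma_1,\Gamma_3^{\prime})$ is unmarked, every edge it contains has both endpoints unmarked, so $F(\Gamma_1,\Gamma_3^{\prime})\subseteq F_u$. Conversely, if some edge of $F_u$ were not in $F(\Gamma_1,\Gamma_3^{\prime})$, its image in $\Gamma_3^{\prime}$ would be an edge of $F(\Gamma_3^{\prime},\Gamma_2)$ with both endpoints still unmarked (since $\Gamma_1\to\Gamma_3^{\prime}$ only identifies vertices lying in unmarked subtrees), contradicting $\Gamma_3^{\prime}\leq_m\Gamma_2$. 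Hence $F(\Gamma_1,\Gamma_3^{\prime})=F_u=F(\Gamma_1,\Gamma_3)$, and Lemma~\ref{lem:unique-contraction} forces $\Gamma_3^{\prime}=\Gamma_3$.

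The main obstacle I anticipate is the final step of the uniqueness argument, where the presence of an unmarked edge in $F(\Gamma_3^{\prime},\Gamma_2)$ must be converted into a contradiction with $\Gamma_3^{\prime}\leq_m\Gamma_2$. This relies on reading the singly-marked condition strongly enough to forbid an unmarked edge inside a singly-marked component of $F(\Gamma_3^{\prime},\Gamma_2)$; if the weaker reading is used, a short auxiliary argument (showing that each singly-marked component arising in a $\leq_m$ relation must in fact be a star centered at its marked vertex) will be needed to close the gap.
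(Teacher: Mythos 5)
Your construction and uniqueness argument are essentially the paper's proof: obtain $\Gamma_3$ by contracting exactly the unmarked edges of $F(\Gamma_1,\Gamma_2)$, and for uniqueness observe that contracting fewer of them leaves an unmarked edge that must then be contracted in the second step, violating $\leq_m$. Your write-up is correct and simply more detailed than the paper's.

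On the caveat you raise at the end: you have put your finger on a real issue, and the paper's own proof is subject to it too, since the step ``$\Gamma_3 \leq \Gamma_2$ but $\Gamma_3 \not\leq_m \Gamma_2$'' also presupposes the strong, edge-wise reading of $\leq_m$ (every contracted edge has a marked endpoint, equivalently each component of the subforest is a star centered at its marked vertex). However, your proposed fallback --- an auxiliary argument showing that singly-marked components in a $\leq_m$ relation are automatically such stars --- cannot be carried out, because under the literal definition (``singly-marked'' meaning only that the component contains exactly one marked vertex) the lemma itself is false. For example, if $F(\Gamma_1,\Gamma_2)$ is a single component which is a path on the vertices $v$, $u_1$, $u_2$ in order, with $v$ marked and $u_1,u_2$ unmarked, then $\Gamma_3 = \Gamma_1$ satisfies $\Gamma_1 \leq_u \Gamma_3 \leq_m \Gamma_2$ in the weak sense, and so does the distinct tree obtained by contracting only the edge $u_1u_2$. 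So the correct resolution is to adopt the edge-wise reading of $\leq_m$ (which is clearly what the paper intends, given the section title and the phrasing of its proof), at which point your main argument closes with no gap.
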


	\begin{proof}
		If some subforest of $\Gamma_1$ can be contracted to obtain $\Gamma_2$, 
		then we may instead contract only the unmarked edges to obtain $\Gamma_3 \in \planted_n$ 
		such that $\Gamma_1 \leq_u \Gamma_3$ and $\Gamma_3 \leq_m \Gamma_2$.
		Moreover, $\Gamma_3$ is the unique tree with this property; if we 
		contracted a proper subset of the unmarked edges to
		obtain $\Gamma_3$, then we would need to contract such an edge in
		$\Gamma_3$ to obtain $\Gamma_2$, meaning that $\Gamma_3 \leq \Gamma_2$,
		but $\Gamma_3 \not{\leq_m} \Gamma_2$.
	\end{proof}

	\begin{lem}\label{lem:unmarked-meet}
		Suppose that $\Gamma_u \leq_u \Gamma$ and $\Gamma_m \leq_m \Gamma$ in
		$\planted_n$. Then the meet $\Gamma_u \wedge \Gamma_m$ exists in 
		$\planted_n$, and $\Gamma_u\wedge\Gamma_m \leq_m \Gamma_u$
		and $\Gamma_u\wedge\Gamma_m \leq_u \Gamma_m$.
	\end{lem}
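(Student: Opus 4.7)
I would construct a candidate meet $\Gamma_3$ by locally ``uncontracting'' $\Gamma$ using the data of $\Gamma_u$ and $\Gamma_m$. For each unmarked vertex $u$ of $\Gamma$, let $T^u$ denote the unmarked subtree of $\Gamma_u$ that contracts to $u$ under $\Gamma_u\to\Gamma$ (possibly trivial if $\Gamma_u$ agrees with $\Gamma$ near $u$); for each marked vertex $v$ of $\Gamma$, let $T^v$ denote the singly-marked subtree of $\Gamma_m$ contracting to $v$. Since the $T^u$ and $T^v$ sit at disjoint vertices of $\Gamma$, the planted tree $\Gamma_3$ obtained from $\Gamma$ by simultaneously replacing each $u$ by $T^u$ (using the planar embedding of $\Gamma_u$) and each $v$ by $T^v$ (using that of $\Gamma_m$) is well-defined. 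Locally $\Gamma_3$ agrees with $\Gamma_u$ at unmarked vertices and with $\Gamma_m$ at marked vertices, so the valence condition for an $n$-tree is inherited.

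The two stated inequalities are then immediate. The family $\{T^v\}$ is a subforest of $\Gamma_3$ whose components are all singly-marked; contracting it collapses each $T^v$ back to $v$ and leaves the $T^u$ untouched, giving exactly $\Gamma_u$, so $\Gamma_3 \leq_m \Gamma_u$. The dual contraction of the unmarked family $\{T^u\}$ produces $\Gamma_m$, so $\Gamma_3 \leq_u \Gamma_m$.

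For the universal property, suppose $\Gamma' \in \planted_n$ satisfies $\Gamma' \leq \Gamma_u$ and $\Gamma' \leq \Gamma_m$. Then $\Gamma' \leq \Gamma$ and, by Corollary~\ref{cor:subforests}(2), both $F(\Gamma',\Gamma_u)$ and $F(\Gamma',\Gamma_m)$ are subforests of $F(\Gamma',\Gamma)$ as edge sets. Let $F$ denote their intersection. I would first verify that each component of $F$ lies inside a single component of $F(\Gamma',\Gamma)$ and is therefore either unmarked or singly-marked, so that $F$ is a valid subforest for contraction. Then I would identify the result of contracting $\Gamma'$ by $F$ with $\Gamma_3$ locally: at each unmarked vertex $u$ of $\Gamma$, the corresponding component of $F(\Gamma',\Gamma)$ is fully contracted by $\Gamma'\to\Gamma_m$ (since $\Gamma_m$ agrees with $\Gamma$ near $u$), while $\Gamma'\to\Gamma_u$ contracts only the edges lying outside a copy of $T^u$; their intersection therefore contracts exactly those outside edges and leaves $T^u$ in place. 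The dual analysis at each marked vertex reproduces $T^v$, so the contraction of $\Gamma'$ by $F$ coincides with $\Gamma_3$, yielding $\Gamma' \leq \Gamma_3$.

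The main obstacle is this last local matching. Carefully tracking how the two contractions $\Gamma' \to \Gamma_u$ and $\Gamma' \to \Gamma_m$ restrict to each component of $F(\Gamma',\Gamma)$, and confirming that the edge intersection reassembles exactly the prescribed patchwork of $T^u$'s and $T^v$'s, requires some bookkeeping; once this is in hand, Lemma~\ref{lem:unique-contraction} ensures uniqueness of the resulting contraction and completes the proof of the meet property.
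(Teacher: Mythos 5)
Your construction of the candidate meet---simultaneously expanding the unmarked vertices of $\Gamma$ as prescribed by $\Gamma_u$ and the marked vertices as prescribed by $\Gamma_m$, using the fact that these expansions occur at disjoint sets of vertices---is exactly the paper's argument, as is the verification of the two relations $\leq_m$ and $\leq_u$. Where you differ is that the paper stops there: it asserts that the resulting tree is the meet without checking the universal property, whereas you go on to verify that any common lower bound $\Gamma'$ of $\Gamma_u$ and $\Gamma_m$ contracts onto $\Gamma_3$, by intersecting $F(\Gamma',\Gamma_u)$ and $F(\Gamma',\Gamma_m)$ inside $F(\Gamma',\Gamma)$ and matching the quotient locally with the patchwork of the $T^u$ and $T^v$. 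That extra step is correct: the point that makes the local matching go through is that $\Gamma_m \to \Gamma$ is injective over each unmarked vertex $u$, so the entire component of $F(\Gamma',\Gamma)$ over $u$ is contained in $F(\Gamma',\Gamma_m)$ and the intersection there reduces to the edges contracted by $\Gamma'\to\Gamma_u$, whose contraction leaves exactly $T^u$; the dual statement holds at marked vertices. Your version is therefore the more complete one, and arguably necessary for the lemma as literally stated (``the meet exists''); for the paper's subsequent application in the transitivity argument of Lemma~\ref{lem:reduced-poset}, only the existence of a common lower bound with the stated $\leq_m$ and $\leq_u$ properties is used, which likely explains why the universal property is left implicit.
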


	\begin{proof}
		If $\Gamma_u$ is obtained from $\Gamma$ by expanding
		some collection of unmarked vertices into subtrees and
		$\Gamma_m$ is obtained from $\Gamma$ by expanding
		some collection of marked vertices into subtrees, each
		containing exactly one marked point, then these
		expansions necessarily occur at disjoint sets of vertices, 
		so they do not interfere with one another and may be performed 
		simultaneously to obtain $\Gamma_u \wedge \Gamma_m$.
		By construction, we can further see that 
		$\Gamma_u\wedge\Gamma_m \leq_m \Gamma_u$
		and $\Gamma_u\wedge\Gamma_m \leq_u \Gamma_m$.
	\end{proof}

	These lemmas allow us to examine a modified version of the planted tree
	poset, in which we consider $n$-trees up to contraction of unmarked edges.
	
	\begin{defn}
		A planted $n$-tree is \emph{reduced} if it has no edges
		between unmarked vertices. For each planted $n$-tree $\Gamma$, there
		is a unique reduced tree in $\upper(\Gamma)$, denoted $r(\Gamma)$,
		obtained by contracting all edges between unmarked vertices.
		Define an equivalence relation on $\planted_n$ by
		declaring $\Gamma_1 \sim \Gamma_2$ if and only if 
		$r(\Gamma_1) = r(\Gamma_2)$, let $[\Gamma]$ denote the 
		equivalence class of $\Gamma$ under this relation (for which $r(\Gamma)$ is
		the maximum element) and let $\reduced_n$ denote the set of equivalence
		classes $\planted_n / \sim$.
	\end{defn}

	\begin{lem}\label{lem:reduced-poset}
		Declaring that $[\Gamma_1] \leq [\Gamma_2]$ in $\reduced_n$ 
		if and only if $\Gamma_1^\prime \leq \Gamma_2^\prime$ in $\planted_n$ for
		some $\Gamma_1^\prime \in [\Gamma_1]$ and $\Gamma_2^\prime \in [\Gamma_2]$
		yields a partial order for $\reduced_n$.
	\end{lem}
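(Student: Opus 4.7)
My plan is to verify the three axioms of a partial order; reflexivity is immediate since $\Gamma \leq \Gamma$ in $\planted_n$, so the content lies in antisymmetry and transitivity. Both will follow from a short auxiliary reduction combined with Lemma~\ref{lem:unmarked-factor} and Lemma~\ref{lem:unmarked-meet}. The reduction I have in mind is that $[\Gamma_1] \leq [\Gamma_2]$ holds if and only if there exists $X \in [\Gamma_1]$ with $X \leq_m r(\Gamma_2)$. The backward direction is tautological. For the forward direction, any witness $\Gamma_1' \leq \Gamma_2'$ with $\Gamma_i' \in [\Gamma_i]$ combines with the chain $\Gamma_2' \leq_u r(\Gamma_2)$ to give $\Gamma_1' \leq r(\Gamma_2)$, and Lemma~\ref{lem:unmarked-factor} factors this as $\Gamma_1' \leq_u X \leq_m r(\Gamma_2)$ with $X \in [\Gamma_1]$, since $\leq_u$ preserves the equivalence class.

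For transitivity, given $[\Gamma_1] \leq [\Gamma_2]$ and $[\Gamma_2] \leq [\Gamma_3]$, I take $X \in [\Gamma_1]$ with $X \leq_m r(\Gamma_2)$ from the reduction, together with a witness $\Gamma_2' \leq \Gamma_3'$ supplied directly by $[\Gamma_2] \leq [\Gamma_3]$. Since $\Gamma_2' \leq_u r(\Gamma_2)$, Lemma~\ref{lem:unmarked-meet} with $\Gamma = r(\Gamma_2)$, $\Gamma_u = \Gamma_2'$, and $\Gamma_m = X$ produces the meet $Z = \Gamma_2' \wedge X$ satisfying $Z \leq_u X$ and $Z \leq_m \Gamma_2'$. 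The former places $Z$ in $[\Gamma_1]$, and the latter together with $\Gamma_2' \leq \Gamma_3'$ gives $Z \leq \Gamma_3' \in [\Gamma_3]$, so $[\Gamma_1] \leq [\Gamma_3]$.

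For antisymmetry, applying the reduction to both hypotheses yields $X \in [\Gamma_1]$ with $X \leq_m r(\Gamma_2)$ and $X \leq_u r(\Gamma_1)$, and $Y \in [\Gamma_2]$ with $Y \leq_m r(\Gamma_1)$ and $Y \leq_u r(\Gamma_2)$. I apply Lemma~\ref{lem:unmarked-meet} twice: once with $\Gamma = r(\Gamma_1)$, $\Gamma_u = X$, $\Gamma_m = Y$ to obtain $X \wedge Y \leq_m X$ and $X \wedge Y \leq_u Y$; then with $\Gamma = r(\Gamma_2)$, $\Gamma_u = Y$, $\Gamma_m = X$ to obtain $X \wedge Y \leq_m Y$ and $X \wedge Y \leq_u X$. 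Since the contraction $X \wedge Y \to X$ is unique by Lemma~\ref{lem:unique-contraction} and cannot have a nontrivial subforest that is simultaneously unmarked and singly-marked, it must be trivial, forcing $X \wedge Y = X$; symmetrically $X \wedge Y = Y$, so $X = Y$ lies in $[\Gamma_1] \cap [\Gamma_2]$ and therefore $[\Gamma_1] = [\Gamma_2]$.

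The main obstacle is conceptual rather than computational: recognizing that each of the two nontrivial axioms produces exactly the configuration of a $\leq_u$-relation and a $\leq_m$-relation sitting below a common element, which is precisely the hypothesis of Lemma~\ref{lem:unmarked-meet}. The remainder is careful bookkeeping of which equivalence class each auxiliary tree belongs to and the elementary observation that $\leq_u$- and $\leq_m$-contractions compose within their respective types, which is why a tree that is below another via both types must in fact equal it.
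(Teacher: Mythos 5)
Your proof is correct, and it shares the paper's skeleton: reflexivity is trivial, and the factoring of an arbitrary relation into $\leq_u$ followed by $\leq_m$ (Lemma~\ref{lem:unmarked-factor}) together with the meet construction (Lemma~\ref{lem:unmarked-meet}) does the heavy lifting for transitivity in both arguments. Where you genuinely diverge is antisymmetry. The paper argues by a monovariant: the number of edges with exactly one marked endpoint is constant on each equivalence class and strictly decreases under any nontrivial $\leq_m$-contraction, so two classes each $\leq_m$-below the other must coincide. You instead apply Lemma~\ref{lem:unmarked-meet} a second time to produce a common element $Z = X \wedge Y$ satisfying $Z \leq_u X$ and $Z \leq_m X$ simultaneously, and then use the uniqueness of contractions (Lemma~\ref{lem:unique-contraction}) to conclude that a subforest whose components are at once unmarked and singly-marked must be trivial, forcing $Z = X = Y$. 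Both routes are sound; the counting argument is shorter, while yours is more structural and reuses machinery already required for transitivity. Your normalization of the relation to ``there exists $X \in [\Gamma_1]$ with $X \leq_m r(\Gamma_2)$'' is also a small gain in precision: it makes the hypotheses of Lemma~\ref{lem:unmarked-meet} --- a common upper bound reached by $\leq_u$ from one side and $\leq_m$ from the other --- literally verified in the transitivity step, a point the paper's version leaves slightly implicit since its two chosen representatives of $[\Gamma_2]$ need not be directly comparable in $\planted_n$.
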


	\begin{proof}
		By Lemma~\ref{lem:unmarked-factor}, we know that
		if $\Gamma_1^\prime \in [\Gamma_1]$ and $\Gamma_2^\prime \in [\Gamma_2]$
		with $\Gamma_1^\prime \leq \Gamma_2^\prime$, then there is some 
		$\Gamma_1^{\prime\prime} \in [\Gamma_1]$ with 
		$\Gamma_1^{\prime\prime} \leq_m \Gamma_2^\prime$. Thus, it suffices
		to consider contractions involving marked edges in $\planted_n$.
		
		It follows immediately from the definition that this relation is
		reflexive, and anti-symmetry follows from our remark above:
		if $[\Gamma_1] \leq [\Gamma_2]$, then 
		$\Gamma_1^\prime \leq_m \Gamma_2^\prime$ for some 
		$\Gamma_1^\prime,\Gamma_2^\prime \in \planted_n$, 
		and so
		$\Gamma_1^\prime$ has at least as many edges with exactly one marked endpoint
		as $\Gamma_2^\prime$ does. Since the number of such edges is constant
		in each equivalence class, we know that $[\Gamma_2] \leq [\Gamma_1]$ only
		if $\Gamma_1^\prime$ and $\Gamma_2^\prime$ actually have the same number
		of edges with one marked endpoint, in which case $[\Gamma_1] = [\Gamma_2]$.
		
		All that remains is to demonstrate transitivity. Suppose that 
		$[\Gamma_1] \leq [\Gamma_2]$ and 
		$[\Gamma_2] \leq [\Gamma_3]$ in $\reduced_n$. Without loss of generality,
		we may write $\Gamma_1 \leq_m \Gamma_2$ and $\Gamma_2^\prime \leq_m \Gamma_3$,
		where $\Gamma_2^\prime \in [\Gamma_2]$. By Lemma~\ref{lem:unmarked-meet},
		we know that $\Gamma_1 \wedge \Gamma_2^\prime \leq_u \Gamma_1$ and
		$\Gamma_1 \wedge \Gamma_2^\prime \leq_m \Gamma_2^\prime$. By transitivity
		of the partial order for $\planted_n$, we then have that
		$\Gamma_1 \wedge \Gamma_2^\prime \leq_m \Gamma_3$, so $[\Gamma_1] \leq [\Gamma_3]$
		and we are done.
	\end{proof}

	\begin{defn}\label{def:reduced-poset}
		Equipped with the partial order from Lemma~\ref{lem:reduced-poset},
		we refer to $\reduced_n$ as the \emph{reduced tree poset}.
	\end{defn}

	\begin{rem}\label{rem:slide-and-split}
		Suppose that $\Gamma_1$ and $\Gamma_2$ are reduced $n$-trees and that
		$[\Gamma_1] < [\Gamma_2]$ in $\reduced_n$. There are three possibilities 
		for the relationship between $\Gamma_1$ and $\Gamma_2$.
		\begin{enumerate}
			\item (contraction) If $\Gamma_1 < \Gamma_2$ in $\planted_n$, then 
			$\Gamma_2$ is obtained from $\Gamma_1$ by contracting one 
			singly-marked edge. 
		\end{enumerate}
		If $\Gamma_2$ is not a contraction of $\Gamma_1$, then there exist
		$\Gamma_1^\prime$ and $\Gamma_2^\prime$ with 
		$\Gamma_1^\prime \leq_u \Gamma_1$ and $\Gamma_2^\prime \leq_u \Gamma_2$
		such that $\Gamma_2^\prime$ can be obtained from $\Gamma_1^\prime$
		by contracting some number of singly-marked edges. Since each can be contracted
		independently, it suffices to consider the case when there is only one
		contracted
		singly-marked edge $e$ in $\Gamma_1^\prime$.
		\begin{enumerate}[resume]
			\item (slide) If the unmarked end of $e$ is adjacent to 
			exactly one unmarked vertex,
			then $\Gamma_2$ is obtained from $\Gamma_1$ by ``sliding''
			some number of edges along $e$.
			\item (split) If the unmarked end of $e$ is adjacent to multiple 
			unmarked vertices, then $\Gamma_2$ is obtained from 
			$\Gamma_1$ by ``splitting''
			$e$ into some number of copies with different unmarked endpoints.
		\end{enumerate}
		Thus, when $[\Gamma_1] < [\Gamma_2]$ in $\reduced_n$, we know that
		$\Gamma_2$ is obtained from $\Gamma_1$ by a sequence of contractions, slides, 
		and splits along singly-marked edges. See Figure~\ref{fig:slide-and-split}.
	\end{rem}

	\begin{figure}
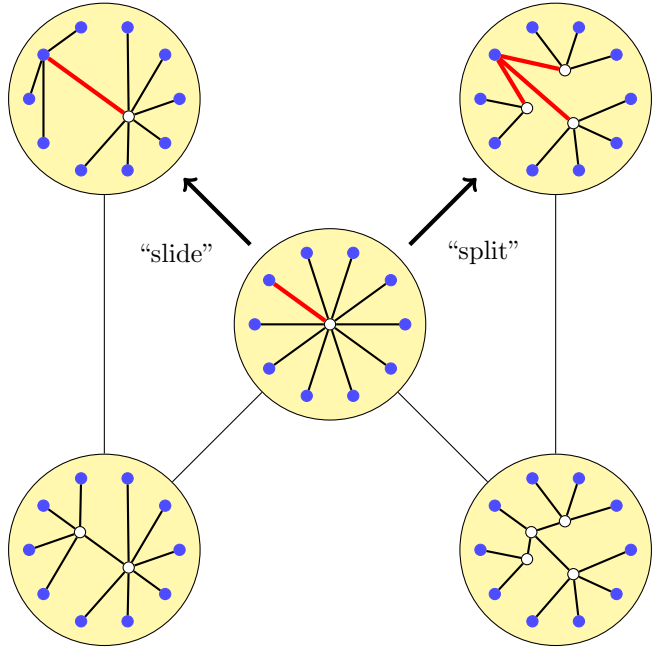

		\centering
		\includestandalone{figures/slide-and-split}
		\caption{A slide and a split for $\ustar_{10}$. The thin lines connecting
		trees indicate the partial order in $\planted_{10}$, and the relevant
		singly-marked edge is thicker and colored red.}
		\label{fig:slide-and-split}
	\end{figure}

	\begin{rem}\label{rem:reduced-disk-rep}
		Let $\Gamma$ be a reduced $n$-tree and suppose that $[\Gamma]$ lies
		in either the lower set or upper set for $[\mstar_n]$ or $[\ustar_n]$.
		By Remark~\ref{rem:slide-and-split}, we then know that $\Gamma$
		is related to either a marked or unmarked star by a sequence of 
		contractions, slides, and splits. In the same spirit as
		Remark~\ref{rem:star-drawing}, note that each of these three
		operations (and their reverses) do not disturb the canonical
		planar embedding of a star onto a disk. Thus, 
		$\Gamma$ can be represented in the plane as a tree with certain
		marked points on the unit circle ($v_1,\ldots,v_{n-1}$ if $\Gamma$
		is related to $\mstar_n$ and $v_1,\ldots,v_n$ if related to $\ustar_n$)
	 	and the rest of the tree on the interior of the unit disk.
	\end{rem}

	\begin{figure}
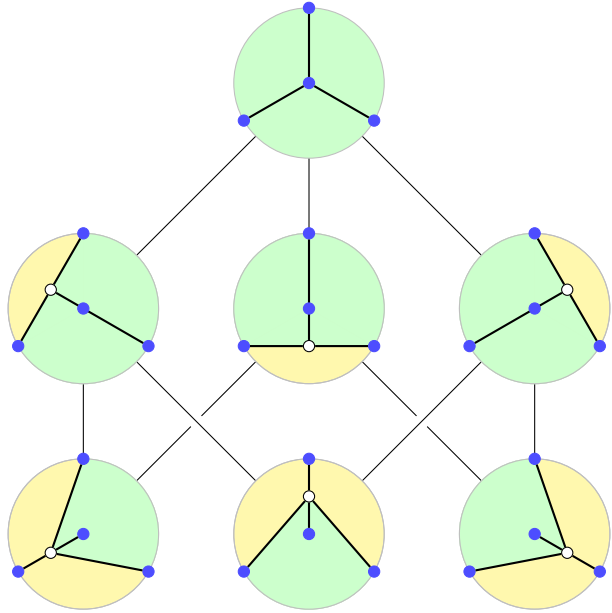

		\centering
		\includestandalone{figures/reduced-lower-set}
		\caption{The lower set $\low([\mstar_4])$, where each equivalence
			class is represented by its maximal element and the regions
			adjacent to the unique interior marked point are shaded
			green}
		\label{fig:reduced-lower-set}			
	\end{figure}

	For the remainder of this article, we consider the structure of lower sets
	and upper sets in the reduced tree poset.
	
	\begin{lem}\label{lem:lower-mstar-reduced}
		The lower set $\low([\mstar_n])$ is isomorphic to $\bool_{n-1}^\ast$.
	\end{lem}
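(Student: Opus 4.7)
The plan is to exhibit an explicit poset isomorphism $\phi\colon \bool_{n-1}^\ast \to \low([\mstar_n])$ by recording, for each reduced tree in the lower set, the set of ``gaps'' between consecutive boundary leaves. By Remark~\ref{rem:reduced-disk-rep}, every $\Gamma \in \low([\mstar_n])$ embeds canonically in a disk with $v_1,\ldots,v_{n-1}$ on the boundary in cyclic order and $v_n$ interior; label the $n-1$ gaps between consecutive boundary leaves by $\{1,\ldots,n-1\}$. Given nonempty $S\subseteq\{1,\ldots,n-1\}$, cutting the cyclic sequence at the gaps in $S$ produces $|S|$ contiguous arcs of leaves, and I define $\Gamma_S$ to be the tree using a direct edge from $v_n$ to each singleton arc and a single unmarked vertex adjacent to $v_n$ together with all leaves of each arc of size at least two. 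The case $|S|=1$ recovers the $\ustar_n$-type tree (with $v_n$ as a leaf of the unique unmarked vertex), and $S=\{1,\ldots,n-1\}$ recovers $\mstar_n$.

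First I would verify that $\Gamma_S$ is a reduced tree in $\low([\mstar_n])$ by noting that no two unmarked vertices are adjacent and by writing down the explicit sequence of contractions collapsing each branch's unmarked vertex into $v_n$. Surjectivity of $\phi$ then rests on a structural analysis of the contraction $\Gamma \to \mstar_n$ via the subforest $F(\Gamma, \mstar_n)$ of Definition~\ref{def:singly-marked}: since $\mstar_n$ has no unmarked vertex, every component of $F$ is singly-marked, and since each $v_i$ for $i<n$ must become a leaf of $v_n$, the component containing $v_i$ must be just $\{v_i\}$. Hence every unmarked vertex of $\Gamma$ lies in the component $T_n$ containing $v_n$, and the reducedness hypothesis forces $T_n$ to be a star about $v_n$ whose unmarked leaves are each adjacent in $\Gamma$ to $v_n$ together with at least two marked leaves that form a contiguous cyclic arc by planarity — precisely the structure of $\Gamma_S$. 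Injectivity is immediate since $S$ is recovered as the set of gaps between the consecutive leaf-arcs around $v_n$.

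Next I would prove $S\subseteq S'$ if and only if $[\Gamma_S]\leq [\Gamma_{S'}]$. The forward direction reduces inductively to adding a single gap $j\notin S$: let $A$ be the arc of $\Gamma_S$ containing $j$, splitting into $A_1,A_2$ in $\Gamma_{S\cup\{j\}}$. When $|A|=2$ the transition is simply the contraction of the edge $v_n-u$ for that branch, matching case (1) of Remark~\ref{rem:slide-and-split}. When $|A|\geq 3$ and $\min(|A_1|,|A_2|)=1$, I would expand the branch's unmarked vertex into two unmarked vertices $u,u'$ with $u'$ incident to $v_n$, the singleton leaf, and $u$, then contract the singly-marked edge $u'-v_n$ to realize a slide, case (2). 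When both $|A_1|,|A_2|\geq 2$, an analogous expansion producing three unmarked vertices $u_1,u_2,u_3$ (with $u_3$ adjacent to $u_1,u_2,v_n$) followed by contracting $u_3-v_n$ realizes a split, case (3). The reverse direction is a routine check that each such elementary move, applied to $\Gamma_S$, increases the gap set by exactly one.

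The main obstacle is the structural characterization in the second paragraph — in particular the planarity argument forcing the marked leaves adjacent to each unmarked vertex of $T_n$ to form a single contiguous cyclic arc — and subsequently verifying that the expansions prescribed in the slide and split cases really land on the intended planted representative of $\Gamma_{S\cup\{j\}}$ under the canonical disk representation, rather than on a differently embedded tree within the same reduced equivalence class.
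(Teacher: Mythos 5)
Your proposal is correct and follows essentially the same route as the paper: your gap set $S$ is exactly the paper's set of disk regions incident to $v_n$, your tree $\Gamma_S$ is exactly the paper's inverse construction (one unmarked vertex for each maximal run of unselected gaps, joined to the corresponding boundary leaves and to $v_n$), and both arguments verify order-preservation by checking that the contractions, slides, and splits of Remark~\ref{rem:slide-and-split} only enlarge the selected set. Your surjectivity argument via the component structure of $F(\Gamma,\mstar_n)$ is more explicit than the paper's appeal to the canonical disk representation, but it is the same proof.
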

	
	\begin{proof}
		Let $[\Gamma] \in \low([\mstar_n])$, where $\Gamma$ is chosen to be
		the maximum element of its equivalence class.
		By Remark~\ref{rem:reduced-disk-rep}, we may draw $\Gamma$ so that 
		the marked points $v_1,\ldots,v_{n-1}$ are on the boundary of the unit
		disk and $v_n$ lies at the origin. Then $\Gamma$ divides the disk into
		$n-1$ regions; let $\Omega_i$ denote the region which is incident to
		$v_i$ and $v_{i+1}$. The vertex $v_n$ is then incident to some nonempty 
		subset of $\{\Omega_1,\ldots,\Omega_{n-1}\}$, which we can naturally identify
		with an element of $\bool_{n-1}^\ast$. Note that this subset is actually
		well-defined on the equivalence class $[\Gamma]$ since the contraction or 
		expansion of unmarked edges does not change which regions are incident to $v_n$. 
		Define $f\colon \low([\mstar_n]) \to \bool_{n-1}^\ast$ to be the function
		which sends $[\Gamma]$ to this corresponding subset of 
		$\{1,\ldots,n-1\}$.
		If $\Gamma_1$ and $\Gamma_2$ are reduced $n$-trees such that 
		$[\Gamma_1],[\Gamma_2] \in \low([\mstar_n])$ and $[\Gamma_1] \leq [\Gamma_2]$,
		then $\Gamma_2$ is obtained from $\Gamma_1$ by a sequence of contractions, 
		slides, and/or splits by Remark~\ref{rem:slide-and-split}, and since
		each of these increases the number of regions incident to $v_n$, it follows
		that $f(\Gamma_1) \subseteq f(\Gamma_2)$ and thus $f$ is order-preserving.
		
		Next, we build an inverse for $f$ by constructing a reduced $n$-tree for each 
		nonempty $A \subset \{1,\ldots,n-1\}$. Begin with $v_1,\ldots,v_{n-1}$ labeling
		points around the boundary of the unit disk and let $v_n$ be at the origin.
		For each maximal string of consecutive cyclically-ordered elements in 
		$\{1,\ldots,n-1\} - A$, introduce a new unmarked vertex near the 
		corresponding boundary vertices, then connect it to those vertices and
		$v_n$. For example, if $i$ and $i+k$ (reduced mod $n-1$) are in $A$,
		but $i+1,i+2,\ldots,i+k-1$ are not, then we would introduce a single
		unmarked vertex associated to this string of elements and connect it
		to $v_{i+1},v_{i+2},\ldots,v_{i+k-1}$ and $v_n$. Finally, we connect
		any remaining boundary vertices directly to $v_n$. The result is 
		a reduced planted $n$-tree such that $v_n$ is adjacent to the 
		desired regions. Moreover, this inverse is order-preserving as well - see 
		Figure~\ref{fig:order-preserving-inverse} for an illustrative example.
		Therefore, $f$ is an isomorphism from $\low(\mstar_n)$ to $\bool_{n-1}^\ast$
		and the proof is complete.
	\end{proof}

	\begin{figure}
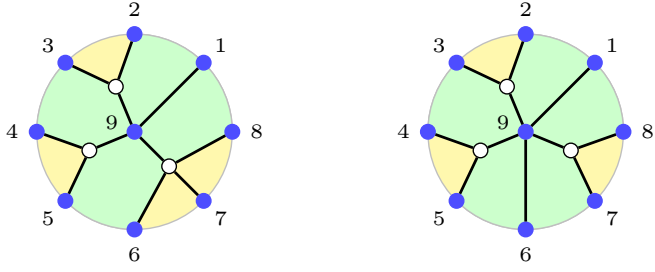

		\centering
		\includestandalone[scale=1.3]{figures/order-preserving-inverse}
		\caption{These reduced $9$-trees correspond to the subsets
			$\{1,3,5,8\}\subseteq\{1,3,5,6,8\}$ in 
			$\bool_{8}^\ast$, where the relevant regions are 
			shaded green. Note that the second
			tree can be obtained from the first by performing a slide.}
		\label{fig:order-preserving-inverse}
	\end{figure}

	Using Lemmas~\ref{lem:star-cyclo} and \ref{lem:lower-mstar-reduced},
	together with the fact that $\bool_{n-1}^\ast$ is the face poset for
	an $(n-2)$-simplex,
	we can see that the combinatorial process of passing from 
	planted $n$-trees to reduced $n$-trees induces a topological
	transformation of a cyclohedron into a simplex of equal dimension.
	This procedure has been described previously by Devadoss:
	the $n$-dimensional cyclohedron can be obtained from the Coxeter simplex
	for the affine Coxeter group of type $\widetilde{A}_{n-1}$ by iteratively 
	truncating a certain sequence of faces \cite{devadoss03}. 
	
	More generally, Theorem~\ref{thm:poset-product} and
	Lemma~\ref{lem:planted-planar-iso} imply that 
	$\low(\Gamma)$ is the face poset for a product of associahedra and
	cyclohedra for any planted $n$-tree $\Gamma$. The following theorem
	gives the corresponding topological transformation for
	$\low([\Gamma])$.

	\begin{thm}[Theorem~\ref{thma:reduced}, first claim]
		\label{thm:lower-reduced}
		Let $\Gamma$ be a reduced $n$-tree with marked interior vertices
		$v_1,\ldots,v_\ell$. Then
		\[
			\low([\Gamma]) \cong \prod_{i=1}^\ell \bool_{\val(v_i)}^\ast
		\]
		and so the lower set $\low([\Gamma])$ is 
		isomorphic to the face poset for a product of simplices.
	\end{thm}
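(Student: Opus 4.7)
The plan is to combine the product decomposition of $\low(\Gamma)$ from Theorem~\ref{thm:poset-product} with the single-vertex result of Lemma~\ref{lem:lower-mstar-reduced}, showing that both pass through the quotient by the equivalence relation $\sim$ in a factor-wise manner.

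I would first use Lemma~\ref{lem:planted-planar-iso} to work in the planted setting, where every equivalence class in $\low([\Gamma])$ has a representative in $\low(\Gamma)$. Applying Theorem~\ref{thm:poset-product} to $\Gamma$, with unmarked interior vertices $u_1, \ldots, u_k$, yields a decomposition
\[
\low(\Gamma) \;\cong\; \prod_{i=1}^k \low(\ustar_{\val(u_i)}) \;\times\; \prod_{j=1}^\ell \low(\mstar_{\val(v_j)+1}),
\]
in which each refinement is encoded by the tuple of its local neighborhoods $\nbhd(\Gamma^\prime_v)$ at the interior vertices of $\Gamma$.

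The central step is to show that the equivalence relation $\sim$ decomposes along this product: two refinements of $\Gamma$ are equivalent if and only if each pair of corresponding local coordinates is equivalent in its own factor. Granted this, the quotient commutes with the product, and I can treat each factor independently. For each unmarked-star factor $\low(\ustar_{\val(u_i)})$, every nontrivial element is an expansion of $\ustar_{\val(u_i)}$ consisting of new unmarked vertices joined by unmarked-unmarked edges, so $r$ collapses any such element back to $\ustar_{\val(u_i)}$ and the quotient is a singleton. For each marked-star factor $\low(\mstar_{\val(v_j)+1})$, Lemma~\ref{lem:lower-mstar-reduced} identifies the quotient with $\bool_{\val(v_j)}^\ast$. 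Multiplying the factor quotients eliminates the unmarked contributions and yields the claimed isomorphism
\[
\low([\Gamma]) \;\cong\; \prod_{j=1}^\ell \bool_{\val(v_j)}^\ast.
\]
Order preservation in both directions is inherited from the corresponding order preservation in Theorem~\ref{thm:poset-product} and Lemma~\ref{lem:lower-mstar-reduced}.

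The main obstacle is verifying the factor-wise factorization of $\sim$. The delicate point is that an edge of $\Gamma$, which appears in the decomposition as an ``inter-factor'' edge between the local neighborhoods of its two endpoints, may after expansion acquire two unmarked endpoints (each supplied by an unmarked boundary vertex of an adjacent local factor) and hence be collapsed by $r$ across the factor boundary. To control these cross-factor collapses I plan to pick, for each class $[\Gamma^\prime] \in \low([\Gamma])$, a representative $\tilde{\Gamma} \leq_m \Gamma$ via Lemma~\ref{lem:unmarked-factor}, so that the subforest $F(\tilde{\Gamma}, \Gamma)$ has a singly-marked component at each marked interior vertex $v_j$ and no components at the unmarked interior vertices. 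Because $\Gamma$ is reduced, every edge of $\Gamma$ carries at least one marked endpoint, and tracking the action of $r$ should show that any cross-factor contractions pass through the unmarked-star factors (which are trivially equivalent) without confusing distinct equivalence classes in the marked-star factors. Checking this, together with verifying that the assignment of a local class at each $v_j$ is independent of the choice of $\tilde{\Gamma}$, is where I expect most of the work to reside.
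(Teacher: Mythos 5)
Your proposal is correct and follows essentially the same route as the paper: decompose $\low(\Gamma)$ via Theorem~\ref{thm:poset-product}, observe that the unmarked-star factors collapse to a point under $\sim$ while the marked-star factors become $\bool_{\val(v_j)}^\ast$ by Lemma~\ref{lem:lower-mstar-reduced}, and check that $\sim$ respects the product decomposition. The cross-factor collapse you flag as the main obstacle is a real subtlety, but it is exactly what the paper dispatches by noting that the neighborhood (equivalently, the incident-region data) of a marked interior vertex is unchanged by contraction of unmarked--unmarked edges, so the marked-factor coordinates descend to well-defined invariants of each equivalence class.
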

	
	\begin{proof}
		The main idea for this proof is similar to that of 
		Theorem~\ref{thm:poset-product}: the structure of $\low([\Gamma])$
		is determined by what the neighborhoods of interior vertices
		for $\Gamma$ look like. The additional wrinkle in this case 
		is that we are now dealing with equivalence classes of trees,
		so it is not immediately clear that such an argument is valid.
		
		To that end, note that $[\ustar_n]$ is the unique element of 
		$\low([\ustar_n])$ since the only trees below $\ustar_n$ in 
		the partial order are obtained by expanding the unique 
		unmarked vertex, and hence lie in the same equivalence class.
		So when considering elements of $\low([\Gamma])$, we need
		only concern ourselves with marked interior vertices. Note
		also that the neighborhood of such an interior vertex remains
		unchanged under contraction of unmarked edges, so there is a
		well-defined meaning for the neighborhoods of marked interior
		vertices of $[\Gamma]$.
		
		From here, the proof is identical to that of 
		Theorem~\ref{thm:poset-product}. Since the neighborhoods of
		interior vertices are well-defined up to equivalence and 
		the expansions of distinct interior vertices can be considered
		independently, we know that $\low([\Gamma])$ is isomorphic
		to the product of the lower sets of the neighborhoods of 
		its interior vertices. Finally, we know that
		$\low([\ustar_n])$ is trivial by our argument above and that
		$\low([\mstar_n]) \cong \bool_{n-1}^\ast$ by 
		Lemma~\ref{lem:lower-mstar-reduced}, so $\low([\Gamma])$ is
		isomorphic to the face poset of a product of simplices.
	\end{proof}

	The second half of Theorem~\ref{thma:reduced} concerns the upper 
	sets of $\reduced_n$, which can also be connected to a 
	well-understood object: the poset of noncrossing hypertrees.
	We give a specialized definition here; more background
	may be found in \cite{hypertrees}.

	\begin{defn}
		Let $n\geq 3$, define $z_k = e^{i\pi k / n}$ for each integer $k$, 
		and let $V = \{z_1,\ldots,z_n\}$.
		A \emph{hyperedge} is a subset of $V$ with at least two elements
		and can be drawn in the plane as the convex hull of its vertices;
		a \emph{hypergraph} is a collection of hyperedges on the vertex set $V$.
		A \emph{noncrossing hypertree} is a hypergraph with no embedded loops 
		such that the intersection of any pair of hyperedges is either empty
		or consists of a single vertex. The set of all
		noncrossing hypertrees on this vertex set admits a partial order: 
		if $\sigma$ and $\tau$ are noncrossing hypertrees, then we say that
		$\sigma \leq \tau$ if each hyperedge of $\sigma$ is a subset of
		a hyperedge of $\tau$. Denote the partially ordered set of 
		noncrossing hypertrees by $\ncht_n$.
	\end{defn}

	\begin{figure}
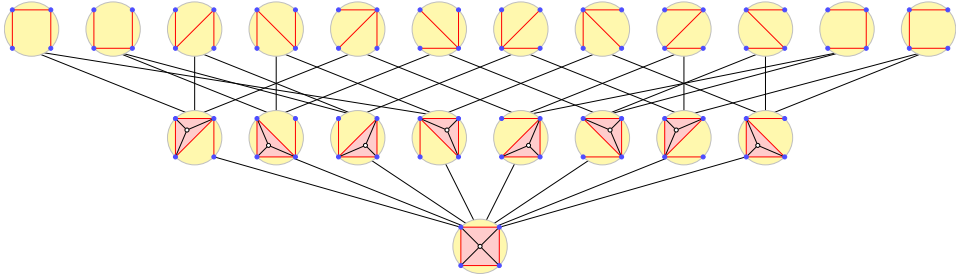

		\centering
		\includestandalone[width=\textwidth]{figures/reduced-upper-set}
		\caption{The upper set $\upper([\ustar_4])$, where each
			equivalence class is represented by its maximal element
			and the corresponding noncrossing hypertree is superimposed}
		\label{fig:reduced-upper-set}
	\end{figure}
	
	\begin{lem}\label{lem:upper-ustar-reduced}
		The upper set $\upper([\ustar_n])$ is isomorphic to the dual of $\ncht_n$.	
	\end{lem}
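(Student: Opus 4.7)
The plan is to construct an order-reversing bijection $\Phi\colon \upper([\ustar_n]) \to \ncht_n$. By Remark~\ref{rem:reduced-disk-rep}, every reduced $n$-tree $\Gamma$ with $[\ustar_n] \leq [\Gamma]$ embeds canonically in the closed unit disk with $v_1,\ldots,v_n$ at the boundary points $z_1,\ldots,z_n$. I would define $\Phi(\Gamma)$ to be the hypergraph on $\{z_1,\ldots,z_n\}$ whose hyperedges are (a) the set of marked neighbors of each unmarked interior vertex of $\Gamma$ and (b) the pair $\{z_i,z_j\}$ for each direct edge between two marked vertices of $\Gamma$. The noncrossing property is immediate from the planar embedding, and the hypertree property follows because the bipartite incidence graph of $\Phi(\Gamma)$ is precisely $\Gamma$ with each marked--marked edge subdivided by a degree-$2$ vertex, hence a tree.

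For the inverse $\Psi\colon \ncht_n \to \upper([\ustar_n])$, given $H \in \ncht_n$ I would build $\Gamma_H$ by placing the marked vertices at $z_1,\ldots,z_n$, introducing one unmarked interior vertex for each hyperedge of size at least $3$ (joined by edges to that hyperedge's marked vertices), and adding a direct edge for each $2$-hyperedge. The hypertree condition ensures $\Gamma_H$ is a tree, and its construction makes it reduced and planar. To show $\Gamma_H \in \upper([\ustar_n])$, I would exhibit a witness $\ustar_n^{**} \in [\ustar_n]$ with $\ustar_n^{**} \leq_m \Gamma_H$ in $\planted_n$ by detaching each marked interior vertex $v_i$ of $\Gamma_H$: replace $v_i$ with a new unmarked vertex $\tilde u_i$ that inherits all of $v_i$'s incident edges, then attach a singly-marked pendant edge from $\tilde u_i$ to $v_i$. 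All marked vertices of $\ustar_n^{**}$ are then leaves, and the unmarked interior of $\ustar_n^{**}$ is $\Gamma_H$ with its marked leaves deleted and interior marked vertices relabeled, hence a connected subtree. Contracting the unmarked edges reduces $\ustar_n^{**}$ to $\ustar_n$, confirming $\ustar_n^{**} \in [\ustar_n]$, while contracting the pendant edges recovers $\Gamma_H$.

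Direct verification gives $\Phi\circ\Psi = \mathrm{id}$ and $\Psi\circ\Phi = \mathrm{id}$. For the order-reversal statement I would prove that $[\Gamma_1] \leq [\Gamma_2]$ in $\reduced_n$ if and only if $\Phi(\Gamma_2) \leq \Phi(\Gamma_1)$ in $\ncht_n$. For the forward direction, use Lemma~\ref{lem:unmarked-factor} to obtain $\Gamma_1^{**} \in [\Gamma_1]$ with $\Gamma_1^{**} \leq_m \Gamma_2$ in $\planted_n$, and track how each singly-marked contraction removes some unmarked vertices from certain clusters of $\Gamma_1^{**}$, possibly breaking a cluster into sub-clusters; each resulting hyperedge of $\Phi(\Gamma_2)$ is then contained in an original hyperedge of $\Phi(\Gamma_1)$. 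For the backward direction, use the bijection to reduce to showing that $\Psi$ reverses covers in $\ncht_n$: when $H_1$ is obtained from $H_2$ by merging two hyperedges sharing a single vertex $v_a$, I would construct a non-reduced representative of $[\Psi(H_1)]$ in which the merged component is resplit through an auxiliary unmarked vertex adjacent to $v_a$, then contract the singly-marked edge between $v_a$ and this auxiliary vertex to obtain $\Psi(H_2)$ after reduction. The main obstacle is the bookkeeping in this order-reversal step, especially the case analysis for how unmarked clusters recombine under singly-marked contractions and how the extended representatives from Remark~\ref{rem:slide-and-split} interact with the cluster structure.
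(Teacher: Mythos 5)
Your proposal is correct and follows essentially the same route as the paper: the same dual correspondence sending each unmarked vertex's set of marked neighbors to a hyperedge (with marked--marked edges as $2$-hyperedges), the same barycentric inverse construction, and the same appeal to Remark~\ref{rem:slide-and-split} to see that contractions, slides, and splits refine hyperedges, yielding an order-reversing bijection. Your explicit witness showing $\Gamma_H \in \upper([\ustar_n])$ and your explicit treatment of $2$-element hyperedges are slightly more careful than the paper's write-up, but the argument is the same.
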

	
	\begin{proof}
		Let $[\Gamma] \in \upper([\ustar_n])$, where $\Gamma$ is chosen to be
		the maximum element of its equivalence class.
		By Remark~\ref{rem:reduced-disk-rep}, we may draw $\Gamma$ so that 
		the marked points $v_1,\ldots,v_n$ are on the boundary of the unit
		disk and the rest of the tree lies in the interior.
		Since $\Gamma$ is reduced, we know that each unmarked vertex $u$ with 
		valence $k$ in $\Gamma$ has a neighborhood $\nbhd(u)$ which is isomorphic 
		to $\ustar_k$, up to relabeling. The leaves of $\nbhd(u)$ then determine 
		a hyperedge, and the set of all such hyperedges gives a hypergraph on 
		the vertex set $V = \{v_1,\ldots,v_n\}$. In fact, this is a noncrossing 
		(since $\Gamma$ is embedded in the plane) hypertree (since $\Gamma$ 
		is a tree); let $f\colon \upper([\ustar_n]) \to \ncht_n^d$ be the function which 
		sends $[\Gamma]$ to the corresponding noncrossing hypertree, where
		$\ncht_n^d$ denotes the dual of the noncrossing hypertree poset.
		
		Constructing an inverse for $f$ is straightforward. Fix a noncrossing
		hypertree. For each hyperedge with at least three vertices, introduce an 
		unmarked vertex at the barycenter and connect it to each of the 
		(marked) vertices at the corners via a 
		straight edge. For each hyperedge with two vertices (i.e. a typical edge), 
		simply leave it as an edge between two marked vertices. 
		Since we began with a noncrossing hypertree, this graph is an embedded
		tree, and in particular it is reduced since no two unmarked vertices are
		adjacent. So $f$ is a bijection.
		
		Finally, we can see that $f$ is an order embedding. If $\Gamma_1$ and $\Gamma_2$
		are reduced trees and $[\Gamma_1] \leq [\Gamma_2]$ in $\upper([\ustar_n])$,
		then by Remark~\ref{rem:slide-and-split}, this is equivalent to saying that
		$\Gamma_2$ can be obtained from $\Gamma_1$ via a sequence of contractions, 
		slides, and splits. We can see by definition that each of these 
		serves to break apart the corresponding hyperedges into smaller pieces,
		and vice versa. Thus, $[\Gamma_1] \leq [\Gamma_2]$ in $\upper([\ustar_n])$ 
		if and only if $f([\Gamma_1]) \leq f([\Gamma_2])$ in $\ncht_n^d$, so 
		$f$ is a poset isomorphism.
	\end{proof}
	
	\begin{thm}[Theorem~\ref{thma:reduced}, second claim]
		\label{thm:upper-reduced}
		Let $\Gamma$ be a reduced $n$-tree with unmarked vertices
		$u_1,\ldots,u_k$. Then
		\[
			\upper([\Gamma]) \cong \prod_{i=1}^k \ncht_{\val(u_i)}.
		\]
	\end{thm}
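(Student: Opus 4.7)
The plan is to parallel the proof of Theorem~\ref{thm:lower-reduced} by localizing the upper-set calculation to each interior vertex of $\Gamma$: I want to establish $\upper([\Gamma]) \cong \prod_i \upper([\ustar_{\val(u_i)}])$ and then apply Lemma~\ref{lem:upper-ustar-reduced} to each factor to identify it with a noncrossing hypertree poset. The marked interior vertices of $\Gamma$ should contribute nothing to this product: every edge incident to the center of $\mstar_m$ is marked-to-marked, and such an edge cannot be contracted in a map of $n$-trees (since a contracted subtree may contain at most one marked vertex), so $\upper([\mstar_m])$ is a single point and need not appear in the product.

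The decomposition map $\psi\colon \upper([\Gamma]) \to \prod_{i=1}^k \upper([\ustar_{\val(u_i)}])$ would be defined using Remark~\ref{rem:slide-and-split}: every $[\Gamma'] \geq [\Gamma]$ is reached from $[\Gamma]$ by a sequence of contractions, slides, and splits along singly-marked edges, and each such operation is localized at the unmarked end of the edge involved. Because $\Gamma$ is reduced, the unmarked vertices $u_1,\ldots,u_k$ are pairwise non-adjacent, so operations performed near distinct $u_i$'s do not interfere once the appropriate unmarked expansions are chosen. Reading off the local tree near each $u_i$ yields the $i$th coordinate of $\psi([\Gamma'])$, which lives in $\upper([\ustar_{\val(u_i)}])$ because $\nbhd(u_i) \cong \ustar_{\val(u_i)}$.

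Bijectivity and order-preservation of $\psi$ should then follow exactly as in Theorem~\ref{thm:poset-product}: any tuple of local trees in the codomain can be glued back together along the common marked neighbors of each $u_i$ to reconstruct a reduced $n$-tree, and each of the three basic operations on $[\Gamma]$ is recorded by the same operation on exactly one factor. Applying Lemma~\ref{lem:upper-ustar-reduced} to each factor finally converts $\prod_i \upper([\ustar_{\val(u_i)}])$ into the stated product of noncrossing hypertree posets.

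The main obstacle will be formalizing the notion of the ``local tree near $u_i$'' within the equivalence class $[\Gamma']$, since, unlike the situation in Theorem~\ref{thm:lower-reduced}, slides and splits only become visible after passing to an unmarked expansion $\Gamma_1' \leq_u \Gamma_1$. I would address this by choosing a canonical representative of $[\Gamma']$ relative to $\Gamma$ — for instance, using the meet from Lemma~\ref{lem:unmarked-meet} applied to a suitable lift of $\Gamma'$ and to $\Gamma$ itself — so that the local data at each $u_i$ is well-defined and independent of the intermediate choices. Once this canonical form is in place, the independence of the operations at distinct $u_i$'s is a direct consequence of $\Gamma$ being reduced, and the remaining verifications reduce to routine bookkeeping.
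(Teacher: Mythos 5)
Your proposal is correct and follows essentially the same route as the paper: reduce to the triviality of $\upper([\mstar_m])$, use the isolation of unmarked vertices in a reduced tree to decompose $\upper([\Gamma])$ as $\prod_i \upper([\ustar_{\val(u_i)}])$, and finish with Lemma~\ref{lem:upper-ustar-reduced}. In fact the paper's proof is terser than yours---it simply asserts the independence of the unmarked vertices by analogy with Theorem~\ref{thm:lower-reduced}---so your attention to making the local data at each $u_i$ well-defined on equivalence classes is a reasonable elaboration rather than a deviation.
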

	
	\begin{proof}
		The argument is similar to that of Theorem~\ref{thm:lower-reduced}.
		Note first that $\upper([\mstar_n])$ is trivial since no contractions,
		slides, or splits can be performed without unmarked vertices. 
		So the structure of $\upper([\Gamma])$ is determined entirely by the
		unmarked vertices of $\Gamma$, which are isolated since $\Gamma$ is assumed
		to be reduced. 
		
		For each unmarked vertex $u_i$ in $\Gamma$, the subtree $\nbhd(u_i)$ can be 
		identified with $\ustar_{\val(u_i)}$. As in the case of 
		Theorem~\ref{thm:lower-reduced}, these unmarked vertices contribute
		independently to the structure of $\upper([\Gamma])$. More specifically,
		we can see that 
		\[
			\upper([\Gamma]) \cong \prod_{i=1}^k \upper\left(\left[\ustar_{\val(u_i)}\right]\right)
			\cong \prod_{i=1}^k \ncht_{\val(u_i)},
		\]
		where the second isomorphism is due to Lemma~\ref{lem:upper-ustar-reduced}.
	\end{proof}

	\begin{rem}
		Each cell in the polyhedral tree complex is a product of 
		associahedra and cyclohedra by Theorem~\ref{thm:polyhedral-tree-complex},
		so Theorems~\ref{thm:lower-reduced} and \ref{thm:upper-reduced}
		suggest that by passing from planted $n$-trees to reduced $n$-trees,
		the polyhedral tree complex is transformed into a polysimplicial cell complex
		for which each vertex link is described by the poset of noncrossing hypertrees.
		This transformation can be viewed on each cell as a truncation of certain
		faces, but in such a way that the truncations of adjacent cells are compatible.
		As it turns out, this resulting complex is isomorphic to the cactus complex
		defined in \cite{nekrashevych14} and is homeomorphic to a subspace 
		of the dual braid complex described in \cite{brady10}. 
		The connections between these three complexes will be made 
		explicit in a future article.
	\end{rem}
	
	\section*{Acknowledgements}

	I am very grateful to Justin Lanier and Jon McCammond for many helpful conversations.
	
	\bibliographystyle{amsalpha}
	\bibliography{polyhedral-tree-complex-v2}

\providecommand{\bysame}{\leavevmode\hbox to3em{\hrulefill}\thinspace}
\providecommand{\MR}{\relax\ifhmode\unskip\space\fi MR }
\providecommand{\MRhref}[2]{%
  \href{http://www.ams.org/mathscinet-getitem?mr=#1}{#2}
}
\providecommand{\href}[2]{#2}
\begin{thebibliography}{AHBHY18}

\bibitem[AHBHY18]{arkani-hamed}
Nima Arkani-Hamed, Yuntao Bai, Song He, and Gongwang Yan, \emph{Scattering forms and the positive geometry of kinematics, color and the worldsheet}, J. High Energ. Phys \textbf{2018} (2018), no.~96.

\bibitem[Aur14]{auroux14}
Denis Auroux, \emph{A beginner's introduction to {F}ukaya categories}, Contact and symplectic topology, Bolyai Soc. Math. Stud., vol.~26, J\'{a}nos Bolyai Math. Soc., Budapest, 2014, pp.~85--136. \MR{3220941}

\bibitem[BHV01]{billera01}
Louis~J. Billera, Susan~P. Holmes, and Karen Vogtmann, \emph{Geometry of the space of phylogenetic trees}, Adv. in Appl. Math. \textbf{27} (2001), no.~4, 733--767.

\bibitem[BLMW22]{belk}
James Belk, Justin Lanier, Dan Margalit, and Rebecca~R. Winarski, \emph{Recognizing topological polynomials by lifting trees}, Duke Math. J. (2022).

\bibitem[BM10]{brady10}
Tom Brady and Jon McCammond, \emph{Braids, posets and orthoschemes}, Algebr. Geom. Topol. \textbf{10} (2010), no.~4, 2277--2314. \MR{2745672}

\bibitem[Bra01]{brady01}
Thomas Brady, \emph{A partial order on the symmetric group and new {$K(\pi,1)$}'s for the braid groups}, Adv. Math. \textbf{161} (2001), no.~1, 20--40. \MR{1857934}

\bibitem[BT94]{bott94}
Raoul Bott and Clifford Taubes, \emph{On the self-linking of knots}, vol.~35, 1994, Topology and physics, pp.~5247--5287. \MR{1295465}

\bibitem[CV86]{culler86}
Marc Culler and Karen Vogtmann, \emph{Moduli of graphs and automorphisms of free groups}, Invent. Math. \textbf{84} (1986), no.~1, 91--119. \MR{830040}

\bibitem[Dev99]{devadoss98}
Satyan~L. Devadoss, \emph{Tessellations of moduli spaces and the mosaic operad}, Homotopy invariant algebraic structures ({B}altimore, {MD}, 1998), Contemp. Math., vol. 239, Amer. Math. Soc., Providence, RI, 1999, pp.~91--114. \MR{1718078}

\bibitem[Dev03]{devadoss03}
\bysame, \emph{A space of cyclohedra}, Discrete Comput. Geom. \textbf{29} (2003), no.~1, 61--75.

\bibitem[DH84]{douady-hubbard-2}
A.~Douady and J.~H. Hubbard, \emph{\'{E}tude dynamique des polyn\^{o}mes complexes. {P}artie {I}}, Publications Math\'{e}matiques d'Orsay [Mathematical Publications of Orsay], vol.~84, Universit\'{e} de Paris-Sud, D\'{e}partement de Math\'{e}matiques, Orsay, 1984. \MR{762431}

\bibitem[DH85]{douady-hubbard-1}
\bysame, \emph{\'{E}tude dynamique des polyn\^{o}mes complexes. {P}artie {II}}, Publications Math\'{e}matiques d'Orsay [Mathematical Publications of Orsay], vol.~85, Universit\'{e} de Paris-Sud, D\'{e}partement de Math\'{e}matiques, Orsay, 1985, With the collaboration of P. Lavaurs, Tan Lei and P. Sentenac. \MR{812271}

\bibitem[FR07]{fomin07}
Sergey Fomin and Nathan Reading, \emph{Root systems and generalized associahedra}, Geometric combinatorics, IAS/Park City Math. Ser., vol.~13, Amer. Math. Soc., Providence, RI, 2007, pp.~63--131. \MR{2383126}

\bibitem[Mar99]{markl99}
Martin Markl, \emph{Simplex, associahedron, and cyclohedron}, Higher homotopy structures in topology and mathematical physics ({P}oughkeepsie, {NY}, 1996), Contemp. Math., vol. 227, Amer. Math. Soc., Providence, RI, 1999, pp.~235--265. \MR{1665469}

\bibitem[McC]{hypertrees}
Jon McCammond, \emph{Noncrossing hypertrees}.

\bibitem[Nek14]{nekrashevych14}
Volodymyr Nekrashevych, \emph{Combinatorial models of expanding dynamical systems}, Ergodic Theory Dynam. Systems \textbf{34} (2014), no.~3, 938--985. \MR{3199801}

\bibitem[Pen96]{penner96}
R.~C. Penner, \emph{The simplicial compactification of {R}iemann's moduli space}, Topology and {T}eichm\"{u}ller spaces ({K}atinkulta, 1995), World Sci. Publ., River Edge, NJ, 1996, pp.~237--252. \MR{1659667}

\bibitem[Sim03]{simion03}
Rodica Simion, \emph{A type-{B} associahedron}, vol.~30, 2003, Formal power series and algebraic combinatorics (Scottsdale, AZ, 2001), pp.~2--25.

\bibitem[Sta12]{stasheff12}
Jim Stasheff, \emph{How {I} `met' {D}ov {T}amari}, Associahedra, {T}amari lattices and related structures, Prog. Math. Phys., vol. 299, Birkh\"{a}user/Springer, Basel, 2012, pp.~45--63. \MR{3221533}

\bibitem[Wac07]{wachs07}
Michelle~L. Wachs, \emph{Poset topology: tools and applications}, Geometric combinatorics, IAS/Park City Math. Ser., vol.~13, Amer. Math. Soc., Providence, RI, 2007, pp.~497--615.

\bibitem[Zie95]{ziegler95}
G\"{u}nter~M. Ziegler, \emph{Lectures on polytopes}, Graduate Texts in Mathematics, vol. 152, Springer-Verlag, New York, 1995. \MR{1311028}

\end{thebibliography}

\end{document}